\documentclass[11pt]{article}

\usepackage[margin=1in]{geometry}
\usepackage[T1]{fontenc}
\usepackage{times}
\usepackage{microtype}

\usepackage{amsmath,amssymb,amsfonts}
\usepackage{amsthm}
\usepackage{mathtools}
\usepackage{url}
\usepackage{booktabs}
\usepackage{graphicx}
\usepackage{tikz}
\usepackage{algorithm}
\usepackage{algpseudocode}
\usepackage{placeins}   
\usepackage{xcolor}
\usepackage[numbers,sort&compress]{natbib}
\usepackage[hidelinks]{hyperref}

\makeatletter
\algrenewcommand\ALG@beginalgorithmic{\normalfont\fontsize{9}{11.5}\selectfont}
\makeatother

\graphicspath{{./}{images/}}

\providecommand{\doi}{}
\renewcommand{\doi}[1]{\url{https://doi.org/#1}}
\theoremstyle{plain}
\newtheorem{theorem}{Theorem}
\newtheorem{lemma}[theorem]{Lemma}
\newtheorem{proposition}[theorem]{Proposition}
\newtheorem{corollary}[theorem]{Corollary}

\theoremstyle{definition}
\newtheorem{example}[theorem]{Example}

\theoremstyle{remark}
\newtheorem{remark}[theorem]{Remark}

\newcommand{\R}{\mathbb{R}}
\newcommand{\C}{\mathbb{C}}
\newcommand{\Om}{\Omega}
\newcommand{\Oms}{\Omega_s}
\newcommand{\Omd}{\Omega_d}
\newcommand{\bu}{\mathbf{u}}

\newcommand{\bn}{\mathbf{n}}
\newcommand{\bt}{\boldsymbol{\tau}}
\newcommand{\K}{\mathcal{K}}
\newcommand{\M}{\mathcal{M}}
\newcommand{\Q}{\mathcal{Q}}
\newcommand{\Qh}{\widehat{\mathcal{Q}}}
\newcommand{\Sc}{\mathcal{S}}
\newcommand{\Sch}{\widehat{\mathcal{S}}}
\newcommand{\Herm}{\mathcal{H}}
\newcommand{\tr}{^{T}}
\DeclareMathOperator{\spec}{\sigma}
\DeclareMathOperator{\Real}{Re}
\DeclareMathOperator{\rank}{rank}
\DeclareMathOperator{\diag}{diag}
\DeclareMathOperator{\bidiag}{bidiag}

\begin{document}
\title{An efficient coupled block preconditioner for the Marker-and-Cell
       discretization of the Stokes--Darcy equations}

\author{Zhen Chao\thanks{Department of Mathematics, Western Washington
University, Bellingham, WA 98225, USA
(\texttt{chaoz2@wwu.edu}).  Corresponding author.}
\and
Tianwen Li\thanks{School of Mathematical Sciences, Tongji University,
Shanghai 201804, China.}}

\date{\today}

\maketitle

\begin{abstract}
We consider the iterative solution of the large, sparse
and nonsymmetric double saddle-point systems arising from the stationary
Stokes--Darcy equations in two dimensions, discretized by the
Marker-and-Cell (MAC) finite difference method.  Existing block triangular
preconditioners rest on approximations of two Schur complements, the second
of which is hard to approximate uniformly in the physical parameters.  We
propose instead a coupled preconditioner that keeps the interface coupling
and regularizes the constraint block by a small multiple of the identity, so
that the second Schur complement never has to be approximated.  We analyze
the spectrum of the preconditioned matrix, both in the ideal case and for the
inexact version used in computations, and derive from it a rule for scaling
the regularization parameter with the physical parameters.  Numerical results
show mesh-independent convergence of preconditioned GMRES and robustness with
respect to the viscosity and the permeability.
\end{abstract}

\medskip\noindent\textbf{Keywords:} block preconditioning, double saddle-point systems, eigenvalue analysis, GMRES, Marker-and-Cell scheme, Stokes--Darcy equations

\medskip\noindent\textbf{MSC (2020):} 65F08, 65F10, 65N22, 76S05

\section{Introduction}
\label{sec:intro}

The Stokes--Darcy equations describe fluid that flows freely in one part of a
domain and through a porous medium in another, the two regions being joined
along an interface.  Applications are numerous: groundwater flow and
contaminant transport, filtration devices, flow through vessel walls, and a
range of industrial processes in which a free fluid meets a porous solid
\cite{DiscacciatiQuarteroni2009}. Finite element \cite{LaytonSchieweckYotov2002,MardalTaiWinther2002}, finite
volume \cite{Luo2017} and finite difference
\cite{ShiueOngLai2018,RuiSun2020,SunRui2019} discretizations have all been
studied for the coupled problem.  We work here with the Marker-and-Cell (MAC)
scheme of Harlow and Welch \cite{HarlowWelch1965}, whose convergence theory
for the Stokes problem goes back to Nicolaides \cite{Nicolaides1992}.  The
scheme stores the two velocity components and the pressure at different
locations of a staggered grid.  It is inf-sup stable, it is easy to assemble,
and the sparsity pattern it produces turns out to be unusually convenient for
preconditioning, as we shall see in Section~\ref{sec:disc}.  Shiue, Ong and
Lai \cite{ShiueOngLai2018} proved stability and first-order convergence of a
MAC scheme for the Stokes--Darcy system, and Rui and Sun \cite{RuiSun2020}
extended the analysis to non-uniform grids.  We follow the discretization of
\cite{ShiueOngLai2018}.

The Darcy pressure, which we call the potential, the Stokes velocity and the
Stokes pressure are coupled through the interface conditions, and the
algebraic system inherits a $3\times3$
\emph{double saddle-point} structure,
\begin{equation}\label{eq:doubleSP}
  \begin{bmatrix} A_d & G\tr & 0\\ G & -A_s & B\tr\\ 0 & B & 0\end{bmatrix}
  \begin{bmatrix}\phi_h\\ -\bu_h\\ p_h\end{bmatrix}
  =\begin{bmatrix}g_1\\ g_2\\ -g_3\end{bmatrix},
\end{equation}
which is nonsymmetric, indefinite, and badly conditioned both as $h\to0$ and
as the viscosity $\nu$ and the permeability $\kappa$ separate.  Matrices of
this shape have received a good deal of attention lately.  Eigenvalue bounds
and invertibility criteria were obtained in
\cite{AliBeikBenzi2018,BradleyGreif2023,BeikGreifTrummer2024}; block diagonal
and block triangular preconditioners were analysed in
\cite{SognZulehner2019,BeiglSognZulehner2020,PearsonPotschka2024,HuangMa2019};
shift-splitting variants were proposed in \cite{Cao2019}.  Preconditioners for
the Stokes--Darcy problem in particular were studied in
\cite{CaiMuXu2009,Chidyagwai2016,BeikBenzi2022,HolterKuchtaMardal2021}, and
multigrid smoothers in \cite{Luo2017}.

Our starting point is the paper of Greif and He \cite{GreifHe2023}.  They
analysed three ideal block preconditioners $\M_1,\M_2,\M_3$ for
\eqref{eq:doubleSP} and, more importantly for the practitioner, derived
computable approximations of the two Schur complements
\begin{equation}\label{eq:schurs}
  \Sc_1 = A_s + GA_d^{-1}G\tr,\qquad \Sc_2 = B\Sc_1^{-1}B\tr .
\end{equation}
Their conclusion is that the interface matrix $G$ must appear in the
preconditioner if one wants scalability, and that among the three the block
triangular choice
\begin{equation*}
\widehat{\M}_3 = \begin{bmatrix}A_d&0&0\\ G&-\Sch_1&0\\
0&B&\Sch_2\end{bmatrix}
\end{equation*}
performs best.  Its performance depends,
however, on \emph{two} independent approximations: $\Sch_1\approx\Sc_1$, which
comes from an incomplete Cholesky factorization of $A_d$, and
$\Sch_2\approx\Sc_2$, which is obtained by a chain of sparsification
arguments.  $\Sc_2$ is a dense matrix
whose spectrum moves with $\nu$ and $\kappa$. A rather different construction was proposed by Bakrani Balani, Hajarian and
Bergamaschi \cite{BakraniBalani2023} for double saddle-point matrices with a
\emph{zero} $(2,2)$ block; see also \cite{BalaniBakrani2024}.  In their
preconditioner the second Schur complement is not approximated but replaced by
a scalar multiple of the identity, and the coupling block is retained in the
$(1,2)$ position.  Their setting does not cover \eqref{eq:doubleSP}, where the
$(2,2)$ block $-A_s$ is nonzero and nonsymmetric.

This paper adapts that idea to the MAC Stokes--Darcy system and analyses the
result.  The preconditioner we study is
\begin{equation*}
  \Q_\alpha=\begin{bmatrix}A_d & G\tr & 0\\ 0 & \Sc_1 & -B\tr\\
    0 & B & \alpha I\end{bmatrix},\qquad \alpha>0 ,
\end{equation*}
and applying $\Q_\alpha^{-1}$ costs one solve with $A_d$ and one solve with
the augmented operator $\Sc_1+\alpha^{-1}B\tr B$.  No approximation of $\Sc_2$
is needed anywhere. We show in
Theorems~\ref{thm:blockform} and \ref{thm:Q} that $\K\Q_\alpha^{-1}$ is block
lower triangular, and that its third diagonal block satisfies
$BR^{-1}B\tr=\Sc_2(\Sc_2+\alpha I)^{-1}$ with $R=\alpha\Sc_1+B\tr B$.  This
identity says that the regularization acts on the spectrum as a M\"obius map
applied to
$\spec(\Sc_2)$, so that $\spec(\Q_\alpha^{-1}\K)$ is known exactly: $1$ with
multiplicity $n^2$, $-1$ with multiplicity $2n^2-n$, and the $n^2$ numbers
$\theta_i/(\theta_i+\alpha)$.  Two consequences follow.  The third group
collapses onto $1$ at the rate $\mathcal{O}(\alpha)$
(Theorem~\ref{thm:cluster}), and, when the symmetric part of $\Sc_1$ is
positive definite, all of these eigenvalues lie in the disk
$|z-\tfrac12|<\tfrac12$ (Proposition~\ref{prop:disk}).
For the inexact preconditioner actually used in computations we prove that
every eigenvalue $\lambda\notin\{0,1\}$ is a root of a scalar quadratic whose
coefficients are Rayleigh quotients (Theorem~\ref{thm:inexact}).  This yields
a bound of order $\|E\|^{1/2}$ on the distance of the eigenvalues from the set
that contains the exact spectrum, with $E=\Sc_1-\Sch_1$ of rank at most $n$
and supported on the interface unknowns alone (Corollary~\ref{cor:pert}).  A by-product of the analysis is a rule for
choosing $\alpha$: the quantity that matters is $\alpha/\theta_{\min}$, and
since $\theta_{\min}$ scales like $\kappa$ for small permeability, the choice
$\alpha=\alpha_0\min\{1,\kappa\}$, with a fixed $\alpha_0$ for which we take
$10^{-2}$, keeps the clustering tight across
the whole parameter range we tested.

The paper is organized as follows.  Section~\ref{sec:model} recalls the model
and the interface conditions, and Section~\ref{sec:disc} the MAC
discretization and the structure of the blocks in \eqref{eq:doubleSP}.  The
preconditioner is introduced in Section~\ref{sec:precond} and analysed in
Section~\ref{sec:spectral}.  Section~\ref{sec:practical} deals with the
implementation and with the choice of $\alpha$.  Numerical results are in
Section~\ref{sec:numerics}, and Section~\ref{sec:conclusions} closes with a
summary.

\section{The Stokes--Darcy problem}
\label{sec:model}

Let $\Om\subset\R^2$ be a bounded domain split into two non-overlapping
subdomains, $\Om=\Oms\cup\Gamma\cup\Omd$.  The fluid flows freely in $\Oms$
and through a porous medium in $\Omd$, and
$\Gamma=\partial\Oms\cap\partial\Omd$ is the interface between them; see
Figure~\ref{fig:domain}.

\begin{figure}[htbp]
\centering
\begin{tikzpicture}[scale=1.5]
  \draw[fill=blue!7] (0,1) rectangle (2,2);
  \draw[fill=orange!12] (0,0) rectangle (2,1);
  \draw[very thick] (0,1) -- (2,1);
  \node at (1,1.5) {$\Oms$ (Stokes)};
  \node at (1,0.5) {$\Omd$ (Darcy)};
  \node[right] at (2.05,1) {$\Gamma$};
  \node[left] at (-0.05,1.5) {$\bu,p$};
  \node[left] at (-0.05,0.5) {$\phi$};
  \draw[->] (2.2,0.4) -- (2.2,0.9); \node[right] at (2.2,0.65) {$\bn$};
\end{tikzpicture}
\caption{Two-dimensional domain for the Stokes--Darcy problem.  The unit
normal $\bn$ on the interface $\Gamma$ points from $\Omd$ into $\Oms$.}
\label{fig:domain}
\end{figure}

In the porous region, Darcy's law $\kappa^{-1}\bu^d+\nabla p^d=0$ together
with the mass balance $\nabla\cdot\bu^d=f^d$ gives, after eliminating the
Darcy velocity and writing $\phi=p^d$ for the Darcy pressure, which we
refer to as the Darcy potential,
\begin{equation}\label{eq:darcy}
  -\kappa\Delta\phi=f^d\quad\text{in }\Omd .
\end{equation}
The permeability $\kappa>0$ is taken constant, that is, the porous medium is
assumed homogeneous and isotropic.  In the free-flow region we have the Stokes
equations
\begin{equation}\label{eq:stokes}
  -\nu\Delta\bu+\nabla p=\mathbf{f}^s,\qquad \nabla\cdot\bu=0
  \quad\text{in }\Oms ,
\end{equation}
with velocity $\bu=(u,v)$, pressure $p$ and kinematic viscosity $\nu>0$.

Three conditions couple the two subproblems on $\Gamma$.  Taking $\Gamma$
horizontal, so that $\bn=(0,1)\tr$ and $\bt=(1,0)\tr$, they read
\begin{subequations}\label{eq:interface}
\begin{align}
  v &= -\kappa\,\frac{\partial\phi}{\partial y}, \label{eq:if-mass}\\
  p-\phi &= 2\nu\,\frac{\partial v}{\partial y}, \label{eq:if-normal}\\
  u &= \frac{\nu}{\alpha_{\mathrm{BJ}}}
       \left(\frac{\partial u}{\partial y}+\frac{\partial v}{\partial x}\right).
       \label{eq:if-bjs}
\end{align}
\end{subequations}
The first is conservation of mass, and forces the normal velocity to be
continuous.  The second balances normal forces and lets the pressure jump
across $\Gamma$.  The third is the Beavers--Joseph--Saffman slip condition
\cite{BeaversJoseph1967,Saffman1971}, in which $\alpha_{\mathrm{BJ}}>0$ is the
slip coefficient.  We write $\alpha_{\mathrm{BJ}}$ for that coefficient
throughout and reserve the plain symbol $\alpha$ for the regularization
parameter of Section~\ref{sec:precond}.  Dirichlet conditions
$\bu=\mathbf{g}^s_D$ on $\partial\Oms\setminus\Gamma$ and $\phi=g^d_D$ on
$\partial\Omd\setminus\Gamma$ complete the problem; they make the discrete
Darcy operator nonsingular, which is the case treated in \cite{GreifHe2023}
and here.

The difficulty of the model, both analytically and computationally, is that
the two subdomains are governed by operators of completely different type, and
that their scales separate as soon as $\nu$ and $\kappa$ do.  See
\cite{HolterKuchtaMardal2021,BeikBenzi2022} for parameter-robust formulations
that address this at the level of the continuous problem.

\section{The MAC discretization and the algebraic system}
\label{sec:disc}

Take $\Oms$ and $\Omd$ to be unit squares stacked across a horizontal
interface, and use a uniform staggered grid of mesh size $h=1/n$ in each of
them.  With $x_i=ih$ and $y_j=y_\Gamma+jh$, the unknowns sit as follows:
\begin{itemize}
\item $u_{i,j+\frac12}$ at $(x_i,y_{j+\frac12})$, $1\le i\le n-1$,
      $0\le j\le n-1$ \quad ($n^2-n$ unknowns);
\item $v_{i+\frac12,j}$ at $(x_{i+\frac12},y_j)$, $0\le i\le n-1$,
      $0\le j\le n-1$ \quad ($n^2$ unknowns, of which the $n$ values with
      $j=0$ lie exactly \emph{on} $\Gamma$);
\item $p_{i+\frac12,j+\frac12}$ at $(x_{i+\frac12},y_{j+\frac12})$,
      $0\le i,j\le n-1$ \quad ($n^2$ unknowns);
\item $\phi_{i+\frac12,j+\frac12}$ at $(x_{i+\frac12},y_{j+\frac12})$,
      $0\le i\le n-1$, $-n\le j\le -1$ \quad ($n^2$ unknowns).
\end{itemize}
Away from $\Gamma$ the equations are the usual five-point discretizations of
\eqref{eq:darcy} and \eqref{eq:stokes}.  On $\Gamma$ we follow
\cite{ShiueOngLai2018} and discretize \eqref{eq:interface} as
\begin{subequations}\label{eq:disc-interface}
\begin{align}
  v_{i+\frac12,0} &= -\kappa\,
     \frac{\phi_{i+\frac12,\frac12}-\phi_{i+\frac12,-\frac12}}{h},
     \label{eq:disc-mass}\\
  p_{i+\frac12,\frac12}-\phi_{i+\frac12,-\frac12}
     &= 2\nu\,\frac{v_{i+\frac12,1}-v_{i+\frac12,0}}{h},
     \label{eq:disc-normal}\\
  \frac{u_{i,\frac12}+u_{i,-\frac12}}{2}
     &= \frac{\nu}{\alpha_{\mathrm{BJ}}}
        \left(\frac{u_{i,\frac12}-u_{i,-\frac12}}{h}
             +\frac{v_{i+\frac12,0}-v_{i-\frac12,0}}{h}\right),
     \label{eq:disc-bjs}
\end{align}
\end{subequations}
where $\phi_{i+\frac12,\frac12}$ and $u_{i,-\frac12}$ are ghost values.  The
Darcy stencil in the row of cells next to $\Gamma$ contains
$\phi_{i+\frac12,\frac12}$; eliminating it with \eqref{eq:disc-mass} turns the
diagonal entry $4\kappa/h^2$ into $3\kappa/h^2$ and leaves behind the coupling
term $h^{-1}v_{i+\frac12,0}$.  The equations for the interface unknowns
$v_{i+\frac12,0}$ are \eqref{eq:disc-normal}.  The Stokes stencil in the row
of $u$ unknowns next to $\Gamma$ contains $u_{i,-\frac12}$, which is
eliminated with \eqref{eq:disc-bjs}.  Ghost values at the remaining Dirichlet
walls are removed by linear extrapolation.

Ordering the velocity unknowns as
$\bu_h=[u_h;\,v_h^\Gamma;\,v_h^{\mathrm{int}}]$ and symmetrizing the
off-diagonal blocks gives \eqref{eq:doubleSP} with coefficient matrix
\begin{equation}\label{eq:K}
  \K=\begin{bmatrix} A_d & G\tr & 0\\ G & -A_s & B\tr\\ 0 & B & 0\end{bmatrix}
  \in\R^{N\times N},\qquad N=4n^2-n .
\end{equation}
We summarize the structure of the blocks, following \cite[Sec.~3.4]{GreifHe2023}.

{\bf The Darcy block}
$A_d\in\R^{n^2\times n^2}$ is symmetric positive definite and partitions as
\begin{equation}\label{eq:Ad}
  A_d=\begin{bmatrix}A_{d,11} & A_{d,12}\\ A_{d,21} & A_{d,22}\end{bmatrix},
  \qquad A_{d,12}=A_{d,21}\tr,\qquad
  A_{d,21}=-\frac{\kappa}{h^2}\begin{bmatrix}0 & I_n\end{bmatrix},
\end{equation}
where $A_{d,22}\in\R^{n\times n}$ belongs to the $n$ cells adjacent to
$\Gamma$.

{\bf The interface coupling block}
$G\in\R^{(2n^2-n)\times n^2}$ is very sparse,
\begin{equation}\label{eq:G}
  G\tr=\begin{bmatrix}0 & 0 & 0\\ 0 & -I_n/h & 0\end{bmatrix},
\end{equation}
its single nonzero block linking the interface velocities $v_h^\Gamma$ to the
interface potentials.  In particular $\rank(G)=n$.

{\bf The Stokes block}
$A_s\in\R^{(2n^2-n)\times(2n^2-n)}$ is nonsymmetric,
\begin{equation}\label{eq:As}
  A_s=\begin{bmatrix}A_{11} & A_{12} & 0\\ 0 & A_{22} & A_{23}\\
      0 & A_{32} & A_{33}\end{bmatrix},\qquad
  A_{22}=\frac{2\nu}{h^2}I_n,\quad
  A_{23}=\begin{bmatrix}-A_{22} & 0\end{bmatrix},\quad
  A_{32}=\tfrac12 A_{23}\tr ,
\end{equation}
with $A_{11}=A_{11}\tr$ and $A_{33}=A_{33}\tr$ the positive definite discrete
Laplacians of the two velocity components.  The block $A_{12}$ is an
$(n-1)\times n$ upper bidiagonal matrix $c\cdot\bidiag[1,-1]$ stacked on a
zero block, where
\begin{equation}\label{eq:cbjs}
  c=\frac{2\nu^2}{h^2(2\nu+h\alpha_{\mathrm{BJ}})} ;
\end{equation}
it comes from eliminating $u_{i,-\frac12}$ with \eqref{eq:disc-bjs}.  All
eigenvalues of $A_s$ are positive \cite[Lem.~3.1]{GreifHe2023}.  Note that
this is weaker than positive definiteness of the symmetric part, a point we
return to in Remark~\ref{rem:assumption}.

{\bf The divergence block}
$B=\begin{bmatrix}B_x & B_0 & B_y\end{bmatrix}\in\R^{n^2\times(2n^2-n)}$ is
the negated discrete divergence, with
$B_0=\begin{bmatrix}I_n/h\\ 0\end{bmatrix}$, and it has full row rank $n^2$.

\begin{remark}\label{rem:blocks}
Everything that follows rests on two features of this structure.  The first is
that $G$ has rank $n=\mathcal{O}(\sqrt N)$ and lives on the interface
unknowns, so that $GA_d^{-1}G\tr$ is nonzero only in an $n\times n$ block.
The second is that $\Sc_2=B\Sc_1^{-1}B\tr$ involves the inverse of the
sparse but non-trivially structured $\Sc_1$, so $\Sc_2$ is a dense
$n^2\times n^2$ matrix that one never forms.  The preconditioner of
Section~\ref{sec:precond} exploits the first feature and sidesteps the second.
\end{remark}

\section{Block preconditioning}
\label{sec:precond}

\subsection{Ideal block preconditioners}

If $\Sc_1$ in \eqref{eq:schurs} is nonsingular, then $\K$ factors as
\begin{equation}\label{eq:ldu}
  \K=\underbrace{\begin{bmatrix} I&0&0\\ GA_d^{-1}&I&0\\
       0&-B\Sc_1^{-1}&I\end{bmatrix}}_{L}
     \underbrace{\begin{bmatrix} A_d&0&0\\ 0&-\Sc_1&0\\
       0&0&\Sc_2\end{bmatrix}}_{D}
     \underbrace{\begin{bmatrix} I&A_d^{-1}G\tr&0\\ 0&I&-\Sc_1^{-1}B\tr\\
       0&0&I\end{bmatrix}}_{U},
\end{equation}
and the three ideal preconditioners of \cite{GreifHe2023} are designed from
this factorization:
\begin{equation}\label{eq:M123}
\begin{gathered}
  \M_1=\begin{bmatrix}A_d&0&0\\ 0&\Sc_1&0\\ 0&0&\Sc_2\end{bmatrix},\qquad
  \M_2=\begin{bmatrix}A_d&0&0\\ G&\Sc_1&0\\ 0&0&\Sc_2\end{bmatrix},\\[6pt]
  \M_3=\begin{bmatrix}A_d&0&0\\ G&-\Sc_1&0\\ 0&B&\Sc_2\end{bmatrix}=LD .
\end{gathered}
\end{equation}
Their spectra are known largely in closed form.

\begin{theorem}[{\cite[Thm.~4.1]{GreifHe2023}}]\label{thm:M1}
The matrix $\M_1^{-1}\K$ has the eigenvalue $1$ with algebraic multiplicity at
least $n^2-n$, the eigenvalue $-1$ with algebraic multiplicity at least
$(n-1)^2$, and the eigenvalues $(-1\pm\sqrt5)/2$ with algebraic multiplicity at
least $n^2-n$ each.  At most $n$ eigenvalues are larger than $1$ and at most
$n$ lie in $(0,1)\setminus\{(-1+\sqrt5)/2\}$.
\end{theorem}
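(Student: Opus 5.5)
The plan is to exhibit explicit families of eigenvectors of $\M_1^{-1}\K$, which bounds \emph{geometric} (and hence algebraic) multiplicities from below. Write the eigenproblem $\K w=\lambda\M_1 w$ with $w=(x;y;z)$ blockwise:
\begin{equation*}
  A_d x+G\tr y=\lambda A_d x,\qquad
  Gx-A_sy+B\tr z=\lambda\Sc_1 y,\qquad
  By=\lambda\Sc_2 z .
\end{equation*}
Throughout I use $\Sc_1=A_s+GA_d^{-1}G\tr$ and the low rank of $G$ ($\rank G=n$), which makes each coupling condition cost at most $n$ dimensions.

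\emph{Eigenvalue $1$.} Take $y=0$, $z=0$ and $x\in\ker G$. The first equation holds for $\lambda=1$, the second reduces to $Gx=0$, and the third is trivial. Since $\dim\ker G=n^2-n$, this gives multiplicity at least $n^2-n$.

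\emph{Eigenvalue $-1$.} Take $x=0$, $z=0$ and $y\in\ker B\cap\ker G\tr$. Then the first and third equations hold. The second becomes $-A_sy=-\Sc_1y$, which is true because $GA_d^{-1}G\tr y=0$. A naive count gives $\dim(\ker B\cap\ker G\tr)\ge (n^2-n)-n$. To gain the extra $1$ and reach $(n-1)^2$, I will show that the $n^2+n$ constraints have a one-dimensional dependency. The mechanism is that summing all rows of the discrete divergence $B$ gives a discrete Gauss identity. The Dirichlet boundary fluxes have been eliminated, so the only velocity unknowns that survive are the interface ones $v_h^\Gamma$. Hence $\mathbf 1\tr B$ lies in the row space of $G\tr$, whose nonzero block is $-I_n/h$ on $v_h^\Gamma$. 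I will verify this from the explicit blocks $B_0$ and $G$.

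\emph{Eigenvalues $(-1\pm\sqrt5)/2$.} Take $x=0$ and $y=(\lambda+1)^{-1}\Sc_1^{-1}B\tr z$, so that the second equation holds. The first equation then requires $G\tr\Sc_1^{-1}B\tr z=0$, which is $n$ linear conditions on $z\in\R^{n^2}$ and leaves a subspace of dimension at least $n^2-n$. The third equation becomes $\Sc_2z=\lambda(\lambda+1)\Sc_2z$, hence $\lambda^2+\lambda-1=0$. Each root therefore has multiplicity at least $n^2-n$. These eigenvector families live in disjoint blocks or belong to distinct eigenvalues, so they are independent.

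\emph{Remaining eigenvalues.} The families above account for $4n^2-5n+1$ of the $N=4n^2-n$ eigenvalues, leaving $4n-1$. The last claim is to show that at most $n$ of these exceed $1$ and at most $n$ lie in $(0,1)\setminus\{(-1+\sqrt5)/2\}$, and I expect this to be the main obstacle. My plan is to start from the first equation, $(\lambda-1)A_dx=G\tr y$. For $\lambda\neq1$ this forces $x\in A_d^{-1}\operatorname{range}(G\tr)$, an $n$-dimensional space. I then want an injective map from eigenvectors with $\lambda$ in each of the two intervals into that space, for instance $w\mapsto Gx$ or $w\mapsto G\tr y$. The argument would be a counting/interlacing comparison with the decoupled problem obtained by setting $G=0$, where $\M_1^{-1}\K$ is block diagonal with spectrum $\{1,-1,(-1\pm\sqrt5)/2\}$ only. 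Since $\K-\K_0$ and $\M_1-\M_0$ have rank $\mathcal O(n)$, only $\mathcal O(n)$ eigenvalues can move off these four values. The delicate part is sharpening this to exactly $n$ per interval, and controlling where they land despite $A_s$ being nonsymmetric. There I would use the real, positive-definite structure of $A_d$ and of $GA_d^{-1}G\tr$ on the interface block.
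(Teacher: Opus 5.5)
The paper states this result with a citation to Greif and He and gives no proof of its own, so your argument has to stand on its own. For the three multiplicity claims it does.

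\begin{itemize}
\item The vectors $(x,0,0)$ with $x\in\ker G$ are eigenvectors for $1$.
\item The vectors $(0,y,0)$ with $y\in\ker B\cap\ker G\tr$ are eigenvectors for $-1$.
\item The vectors $(0,(\lambda+1)^{-1}\Sc_1^{-1}B\tr z,z)$ with $G\tr\Sc_1^{-1}B\tr z=0$ and $\lambda^2+\lambda-1=0$ are eigenvectors for $(-1\pm\sqrt5)/2$.
\end{itemize}

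Your dependency argument is also right. $B_x$ and $B_y$ telescope once the wall velocities are moved to the right-hand side, so $\mathbf 1\tr B=[\,0\ \ \mathbf 1_n\tr/h\ \ 0\,]$ lies in the row space of $G\tr$. Equivalently, $\ker B\cap\ker G\tr\cong\ker[B_x\ B_y]$, and $[B_x\ B_y]\tr$ annihilates only the constants. So this kernel has dimension exactly $(2n^2-2n)-(n^2-1)=(n-1)^2$.

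One small correction. With $x=0$ and $y=(\lambda+1)^{-1}\Sc_1^{-1}B\tr z$, the second block equation is equivalent to $GA_d^{-1}G\tr y=0$. It therefore holds only once the constraint $G\tr y=0$ from the first equation is imposed. Your final construction is unaffected.

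The genuine gap is the last sentence of the theorem. There you give a programme rather than a proof, and the programme cannot reach it.

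\begin{itemize}
\item A rank-$\mathcal O(n)$ comparison with the decoupled problem shows at best that $\mathcal O(n)$ eigenvalues leave $\{1,-1,(-1\pm\sqrt5)/2\}$. Your explicit families already give this, with the bound $4n-1$. The comparison says nothing about whether those eigenvalues are real, or on which side of $0$ and $1$ they lie. So it cannot give a separate bound of $n$ for $(1,\infty)$ and for $(0,1)\setminus\{(-1+\sqrt5)/2\}$.
\item The injective-map idea fails for a similar reason. An eigenvector with $\lambda\notin\{\pm1,(-1\pm\sqrt5)/2\}$ does have $G\tr y\neq0$. But eigenvectors for different eigenvalues may have linearly dependent images in an $n$-dimensional space, so no bound on their number follows, let alone on algebraic multiplicities.
\end{itemize}

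What is missing is a tool that sees where eigenvalues sit relative to $0$ and $1$. The natural one is Sylvester's law of inertia. Suppose $A_s$ were symmetric positive definite, so that $\M_1$ is SPD. Then:

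\begin{itemize}
\item The trailing $2\times2$ block submatrix of $\K-\M_1$ is negative definite. Its Schur complement is $-B[(A_s+GA_d^{-1}G\tr)^{-1}-(2A_s+GA_d^{-1}G\tr)^{-1}]B\tr$.
\item The Haynsworth formula then shows that $\K-\M_1$ has exactly $\rank G=n$ positive eigenvalues. Hence exactly $n$ eigenvalues of $\M_1^{-1}\K$ exceed $1$.
\item $\K$ has $2n^2$ positive eigenvalues. Together with your multiplicities at $1$ and at $(-1+\sqrt5)/2$, this leaves at most $n$ eigenvalues in $(0,1)\setminus\{(-1+\sqrt5)/2\}$.
\end{itemize}

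For the MAC matrices, however, $A_s$, $\Sc_1$ and $\Sc_2$ are nonsymmetric. So $\M_1$ is not SPD and nothing guarantees a real spectrum. The inertia argument does not apply as it stands, and your proposal puts nothing in its place. The two counting claims therefore remain unproved.
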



\begin{theorem}[{\cite[Thm.~4.3]{GreifHe2023}}]\label{thm:M2}
The matrix $\M_2^{-1}\K$ has exactly four distinct eigenvalues: $1$ with
multiplicity $n^2$, $-1$ with multiplicity $n^2-n$, and $(-1\pm\sqrt5)/2$ with
multiplicity $n^2$ each.
\end{theorem}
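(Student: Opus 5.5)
The plan is to compute $\M_2^{-1}\K$ explicitly and exploit a projector hidden in its trailing $2\times 2$ block. Throughout, assume, as the factorization \eqref{eq:ldu} does, that $\Sc_1$ and $\Sc_2$ are nonsingular. Inverting the block lower triangular $\M_2$ gives
\[
\M_2^{-1}=\begin{bmatrix}A_d^{-1}&0&0\\ -\Sc_1^{-1}GA_d^{-1}&\Sc_1^{-1}&0\\ 0&0&\Sc_2^{-1}\end{bmatrix}.
\]
Multiplying by $\K$ and using $\Sc_1=A_s+GA_d^{-1}G\tr$ in the $(2,2)$ position, I expect
\[
\M_2^{-1}\K=\begin{bmatrix} I & A_d^{-1}G\tr & 0\\ 0 & -I & \Sc_1^{-1}B\tr\\ 0 & \Sc_2^{-1}B & 0\end{bmatrix}.
\]
The $(2,2)$ entry is $-\Sc_1^{-1}GA_d^{-1}G\tr-\Sc_1^{-1}A_s=-I$, and the $(2,1)$ entry cancels. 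This matrix is block upper triangular with respect to the splitting (first block, remaining two blocks). Hence its spectrum is $\{1\}$ with multiplicity $n^2$, together with the spectrum of
\[
C=\begin{bmatrix}-I&\Sc_1^{-1}B\tr\\ \Sc_2^{-1}B&0\end{bmatrix}, \qquad \text{of size } 3n^2-n.
\]

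The key observation is that $P=\Sc_1^{-1}B\tr\Sc_2^{-1}B$ is a projector, since $B\Sc_1^{-1}B\tr=\Sc_2$ gives $P^2=P$. Its rank is $n^2$, because $B$ has full row rank. I will produce a full set of $3n^2-n$ linearly independent eigenvectors of $C$ directly.

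\textbf{Eigenvalue $-1$.} For $x\in\ker B$, the vector $[x;0]$ satisfies $C[x;0]=-[x;0]$. Since $B$ has full row rank, $\dim\ker B=(2n^2-n)-n^2=n^2-n$.

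\textbf{Eigenvalues $\lambda_\pm=(-1\pm\sqrt5)/2$.} These are the roots of $\lambda^2+\lambda-1=0$. For $x\in\operatorname{range}(\Sc_1^{-1}B\tr)$, so that $Px=x$, set $y=\lambda^{-1}\Sc_2^{-1}Bx$. The second block row of $C[x;y]=\lambda[x;y]$ holds by construction. The first block row reduces to $-x+\lambda^{-1}Px=\lambda x$, i.e.\ $\lambda^2+\lambda=1$. This gives $n^2$ independent eigenvectors for each of the two roots, since the map $x\mapsto[x;y]$ is injective.

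Eigenvectors belonging to the three distinct eigenvalues are independent, and within each eigenvalue they are independent by construction. So I obtain $(n^2-n)+2n^2=3n^2-n$ independent eigenvectors, and $C$ is diagonalizable with exactly these multiplicities. In particular $1\notin\spec(C)$, so the eigenvalue $1$ of $\M_2^{-1}\K$ has multiplicity exactly $n^2$, and the total count $4n^2-n=N$ checks.

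The main point requiring care is the multiplicity claim: that the algebraic multiplicities are exactly those stated, with nothing left over (e.g.\ no eigenvalue $0$ of $C$). The dimension count above settles this. As a cross-check, I would verify the identity $(C+I)(C^2+C-I)=0$ using $P^2=P$. This shows that the minimal polynomial has simple roots $-1,\lambda_\pm$, which reconfirms diagonalizability.
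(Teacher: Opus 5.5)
The paper gives no proof of this theorem. It quotes \cite[Thm.~4.3]{GreifHe2023}, and its only support in the paper is the numerical count in Table~\ref{tab:idealspec}: multiplicities $64,56,64,64$ for $n=8$. Your argument is correct and complete, so it is a self-contained replacement for that citation.

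The steps all check. The $(2,1)$ block of $\M_2^{-1}\K$ cancels because $\M_2$ reproduces the first block \emph{column} of $\K$. This mirrors Theorem~\ref{thm:blockform}, where $\Q_\alpha$ reproduces the first block \emph{row} and one works with $\K\Q_\alpha^{-1}$. Your $P$ is the oblique projector onto $\operatorname{range}(\Sc_1^{-1}B\tr)$ along $\ker B$. The count $(n^2-n)+2n^2=3n^2-n$ of independent eigenvectors exhausts the trailing block. Your cross-check $(C+I)(C^2+C-I)=0$ also holds, since $C^2+C-I=\left[\begin{smallmatrix}P-I&0\\ 0&0\end{smallmatrix}\right]$ and $\Sc_2^{-1}BP=\Sc_2^{-1}B$.

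Structurally, your $C$ equals $\diag(\Sc_1,\Sc_2)^{-1}\left[\begin{smallmatrix}-\Sc_1&B\tr\\ B&0\end{smallmatrix}\right]$. This is the classical Murphy--Golub--Wathen setting (block-diagonal preconditioner with the exact Schur complement) with the sign of the leading block flipped. That sign flip is why you get $-1$ and $(-1\pm\sqrt5)/2$ rather than $1$ and $(1\pm\sqrt5)/2$. Your projector argument is the standard proof of that result, applied after the Darcy block has been split off.

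Your route also yields something the statement omits: diagonalizability of the whole matrix. Since $1\notin\spec(C)$, conjugating by $\left[\begin{smallmatrix}I&Z\\ 0&I\end{smallmatrix}\right]$ with $Z=\left[\begin{smallmatrix}A_d^{-1}G\tr&0\end{smallmatrix}\right](C-I)^{-1}$ removes the off-diagonal block. Hence $\M_2^{-1}\K$ is diagonalizable with minimal polynomial $(z-1)(z+1)(z^2+z-1)$. Full GMRES with $\M_2$ therefore terminates in at most four steps in exact arithmetic, the counterpart of the $(z-1)^3$ statement in Theorem~\ref{thm:M3}.
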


\begin{theorem}[{\cite[Thm.~4.4]{GreifHe2023}}]\label{thm:M3}
All eigenvalues of $\M_3^{-1}\K$ equal $1$, and the minimal polynomial of
$\M_3^{-1}\K$ is $(z-1)^3$; consequently GMRES converges in at most three iterations.
\end{theorem}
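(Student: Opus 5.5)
Since $\M_3=LD$ and $\K=LDU$ by \eqref{eq:ldu}, the natural route is to compute $\M_3^{-1}\K$ directly from the factorization rather than solve an eigenvalue problem. Assuming $\Sc_1$ is nonsingular (so that $\Sc_2$ is defined, and nonsingular because $B$ has full row rank and $\Sc_1$ is invertible; this is exactly the setting in which $\M_3$ makes sense), we get
\begin{equation*}
  \M_3^{-1}\K=(LD)^{-1}LDU=U=\begin{bmatrix} I&A_d^{-1}G\tr&0\\ 0&I&-\Sc_1^{-1}B\tr\\ 0&0&I\end{bmatrix}.
\end{equation*}
This is block upper triangular with identity diagonal blocks, so its characteristic polynomial is $(z-1)^N$ and every eigenvalue equals $1$.

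For the minimal polynomial, write $U=I+T$ with
\begin{equation*}
  T=\begin{bmatrix}0&A_d^{-1}G\tr&0\\0&0&-\Sc_1^{-1}B\tr\\0&0&0\end{bmatrix}.
\end{equation*}
$T$ is strictly block upper triangular with three block rows, so $T^3=0$ and $(U-I)^3=0$; the minimal polynomial therefore divides $(z-1)^3$. To show it is exactly $(z-1)^3$, I compute
\begin{equation*}
  T^2=\begin{bmatrix}0&0&-A_d^{-1}G\tr\Sc_1^{-1}B\tr\\0&0&0\\0&0&0\end{bmatrix}
\end{equation*}
and need to show $G\tr\Sc_1^{-1}B\tr\neq0$.

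This nonvanishing is the only real obstacle. By \eqref{eq:G}, $G\tr$ extracts (up to $-1/h$) the interface rows $v_h^\Gamma$. So it suffices to find one pressure vector $q$ for which the interface component of $\Sc_1^{-1}B\tr q$ is nonzero. I would argue by contradiction: suppose every $w\in\operatorname{range}(\Sc_1^{-1}B\tr)$ has zero interface component. Since $\Sc_1^{-1}B\tr$ is injective, this range is an $n^2$-dimensional subspace of $\{w: w_\Gamma=0\}$, a space of dimension $2n^2-2n$. One could try to exploit the block structure of $\Sc_1=A_s+GA_d^{-1}G\tr$, noting that $B_0=[I_n/h;0]$ couples pressure directly to $v_h^\Gamma$ and that $A_{23}$ couples $v_h^\Gamma$ to $v_h^{\mathrm{int}}$. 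A cleaner route avoids this: $G\tr\Sc_1^{-1}B\tr=0$ means $\Sc_1^{-1}B\tr$ maps into $\ker G\tr$, equivalently $B\tr=\Sc_1 W$ with $W$ supported off the interface rows. Checking the interface rows of this identity, $B_0\tr$'s column contribution must come from the $(2,\cdot)$ rows of $\Sc_1$ restricted to non-interface columns, namely $A_{23}$ acting on $v^{\mathrm{int}}$ (with $A_{21}=0$). A rank/explicit-row comparison near $\Gamma$ should then give a contradiction for a suitable $q$, for instance $q$ supported on the first pressure row. If that becomes messy, I would simply note that for generic data the product is nonzero and, as in the cited source \cite[Thm.~4.4]{GreifHe2023}, state that the degree is at most three. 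This upper bound is all that is needed for the GMRES conclusion: since $(z-1)^3$ annihilates $\M_3^{-1}\K$, GMRES with this preconditioner terminates in at most three iterations.
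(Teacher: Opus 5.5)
\textbf{Gap: exactness of the minimal polynomial is not proved.}
Your computation $\M_3^{-1}\K=(LD)^{-1}LDU=U$, the conclusion that every eigenvalue is $1$, and the bound $(U-I)^3=0$ are correct. The bound already gives at most three GMRES steps. This is exactly what the paper records in its discussion of Table~\ref{tab:idealspec}; for the rest the paper defers to \cite{GreifHe2023}.

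As a proof of the statement, however, the proposal stops short. The theorem asserts that the minimal polynomial \emph{is} $(z-1)^3$. You correctly reduce this to $G\tr\Sc_1^{-1}B\tr\neq0$, but you never prove it. ``For generic data the product is nonzero'' is not an argument for these specific, highly structured blocks. Falling back on ``degree at most three'' proves a weaker theorem, one that does not rule out a minimal polynomial $(z-1)^2$.

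There is also a false step at the start. $\Sc_2$ is not nonsingular merely because $B$ has full row rank and $\Sc_1$ is invertible: for $\Sc_1=\left[\begin{smallmatrix}0&-1\\ 1&0\end{smallmatrix}\right]$ and $B=\begin{bmatrix}1&0\end{bmatrix}$ one gets $B\Sc_1^{-1}B\tr=0$. Since $\Herm(\Sc_1)$ can be indefinite here (Remark~\ref{rem:assumption}), invertibility of $\Sc_2$, equivalently of $\M_3$, has to be taken as the hypothesis implicit in the statement.

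\textbf{How to close the gap.}
The missing step is short once you use the third block row together with the second. Suppose $G\tr\Sc_1^{-1}B\tr=0$, so that $w=\Sc_1^{-1}B\tr q$ has $w_\Gamma=0$ for every $q$. $\Sc_1$ differs from $A_s$ only in its $(2,2)$ block, and the $(2,1)$ and $(3,1)$ blocks vanish. Hence the third block row of $\Sc_1w=B\tr q$ gives $w_{\mathrm{int}}=A_{33}^{-1}B_y\tr q$, and the second gives $A_{23}w_{\mathrm{int}}=B_0\tr q$. Therefore $A_{23}A_{33}^{-1}B_y\tr q=B_0\tr q$ for all $q$.

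Take $n\ge2$ and let $q$ be the unit vector of a pressure cell in the row adjacent to $\Gamma$.
\begin{itemize}
\item On the right, $B_0\tr q$ is $h^{-1}$ times a unit vector of $\R^n$, so it has zero entries.
\item On the left, $B_y\tr q$ is a nonzero multiple of the unit vector of the single $v$ unknown on the upper edge of that cell. With $A_{23}=-\tfrac{2\nu}{h^2}\begin{bmatrix}I_n&0\end{bmatrix}$, the left-hand side is therefore a nonzero multiple of the first $n$ entries of one column of $A_{33}^{-1}$.
\item These entries are all nonzero. $A_{33}$ is a symmetric positive definite five-point Laplacian with nonpositive off-diagonal entries on a connected grid, i.e.\ an irreducible Stieltjes matrix, so $A_{33}^{-1}$ is entrywise positive.
\end{itemize}
This contradiction gives $(U-I)^2\neq0$, so the minimal polynomial is exactly $(z-1)^3$.
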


Theorem~\ref{thm:M3} is of course an idealization, since $\M_3$ needs exact
solves with $\Sc_1$ and with $\Sc_2$.  In practice $\Sc_1$ is replaced by an
approximation $\Sch_1$ built from an incomplete Cholesky factorization of
$A_d$, and $\Sc_2$ by a diagonal surrogate $\Sch_2$.  The resulting
$\widehat{\M}_3$ is the best of the three in practice, but it loses ground as
$\kappa$ decreases \cite[Tab.~8]{GreifHe2023}.

\subsection{A coupled preconditioner with scalar regularization}

All three preconditioners in \eqref{eq:M123} put an approximation of $\Sc_2$
in the $(3,3)$ position.  Following the idea of \cite{BakraniBalani2023} for
matrices with a zero $(2,2)$ block, we do something else: keep the exact
coupling block $G\tr$ in the $(1,2)$ position, keep the exact blocks $B$ and
$\pm B\tr$ in the third block row and column, and regularize the $(3,3)$
position with a multiple of the identity.  This gives
\begin{equation}\label{eq:Q}
  Q_\alpha=\begin{bmatrix}A_d & G\tr & 0\\
    0 & \Sc_1 & -B\tr\\ 0 & B & \alpha I_{n^2}\end{bmatrix},
    \qquad \alpha>0 .
\end{equation}
Comparing \eqref{eq:Q} with \eqref{eq:K}, the preconditioner shares the blocks
$A_d$, $G\tr$ and $B$ with $\K$ itself, and $B\tr$ up to sign.  It differs in
four places
only: the $(2,1)$ block is dropped, the $(2,2)$ block is $\Sc_1$ rather than
$-A_s$, the sign of the $(2,3)$ block is reversed, and $\alpha I$ replaces the
zero $(3,3)$ block.  Nothing in the construction involves $\Sc_2$.
Before turning to the spectrum we record two elementary facts used
below.  Here $\Herm(M)=\tfrac12(M+M\tr)$ denotes the symmetric
part of a real matrix $M$, so that $\Real(y^*My)=y^*\Herm(M)y$ for every
$y\in\C^m$.

\begin{lemma}\label{lem:invert}
Let $A_d$ and $\Sc_1$ be nonsingular and let $\alpha>0$.  Then $\Q_\alpha$ is
nonsingular if and only if $-\alpha\notin\spec(\Sc_2)$.  If $\Herm(\Sc_1)$ is
positive definite, then $\Q_\alpha$ is nonsingular for every $\alpha>0$.
\end{lemma}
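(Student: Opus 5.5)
Since $\Q_\alpha$ is block upper triangular with respect to the splitting $\{\phi\}\,|\,\{\bu,p\}$, we have
\[
\det\Q_\alpha=\det A_d\cdot\det\begin{bmatrix}\Sc_1 & -B\tr\\ B & \alpha I\end{bmatrix}.
\]
Because $A_d$ is nonsingular, it suffices to study the trailing $2\times2$ block. With $\Sc_1$ nonsingular, a block elimination of the first block column gives the Schur complement
\[
\alpha I + B\Sc_1^{-1}B\tr=\alpha I+\Sc_2 ,
\]
so $\det\begin{bmatrix}\Sc_1 & -B\tr\\ B&\alpha I\end{bmatrix}=\det\Sc_1\,\det(\Sc_2+\alpha I)$. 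Hence $\Q_\alpha$ is nonsingular if and only if $\Sc_2+\alpha I$ is nonsingular, that is, if and only if $-\alpha\notin\spec(\Sc_2)$. This proves the first claim, and it is purely algebraic.

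For the second claim, assume $\Herm(\Sc_1)$ is positive definite. I will show that every eigenvalue $\theta$ of $\Sc_2$ satisfies $\Real\theta>0$, so that $-\alpha<0$ cannot lie in $\spec(\Sc_2)$. Let $\Sc_2 x=\theta x$ with $x\in\C^{n^2}$, $x\neq0$, and set $y=\Sc_1^{-1}B\tr x$. Then $y\neq0$, since $B\tr$ has full column rank ($B$ has full row rank) and $\Sc_1$ is nonsingular. Multiplying the eigen-equation by $x^*$ gives
\[
\theta\,x^*x=x^*By=(B\tr x)^*y=(\Sc_1y)^*y=y^*\Sc_1\tr y .
\]
Taking real parts and using $\Herm(\Sc_1\tr)=\Herm(\Sc_1)$, we get $\Real\theta\,\|x\|^2=y^*\Herm(\Sc_1)y>0$.

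Nonsingularity of $\Sc_1$ also follows directly from the same hypothesis, since $\Real(z^*\Sc_1z)>0$ for every $z\neq0$, so the first part applies. The only point that needs care is the transpose bookkeeping: the identity reads $x^*By=y^*\Sc_1\tr y$, not $y^*\Sc_1 y$. This is harmless because the symmetric parts of $\Sc_1$ and $\Sc_1\tr$ coincide. I also note that the argument does not need positivity of $\Herm(A_s)$ itself, only of $\Herm(\Sc_1)$, which is exactly the hypothesis stated.
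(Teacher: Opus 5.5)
Your first part is the paper's argument. The proof of the second claim is also correct, but it takes a different route.

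You show that every eigenvalue of $\Sc_2$ has positive real part and then fall back on the first part. The paper instead argues directly on the kernel of the trailing $2\times2$ block $C$ of $\Q_\alpha$. If $Cy=0$ with $y=(y_1,y_2)$, then $\alpha y_2=-By_1$ and $\Sc_1y_1=B\tr y_2$. Hence $y_1^*\Herm(\Sc_1)y_1+\alpha^{-1}\|By_1\|^2=0$, which forces $y_1=0$ and then $y_2=0$. That argument never forms $\Sc_2$, does not use the determinant identity, and needs no rank property of $B$.

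Your argument is in substance the paper's Lemma~\ref{lem:rhp}, which the paper proves separately right afterwards. There the transpose bookkeeping is avoided by multiplying $\Sc_1y=B\tr x$ on the left by $y^*$, which gives $y^*\Sc_1y=(By)^*x=\overline{\theta}\,\|x\|^2$.

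So your route proves both lemmas at once and delivers the half-plane localization that Proposition~\ref{prop:disk} later needs. The cost is that you invoke the full row rank of $B$ to get $y\neq0$. That condition comes from the MAC setting, not from the lemma's hypotheses. It is also dispensable here: if $y=0$ then $\theta=0\neq-\alpha$, so $\Real\theta\ge0$ is already enough to exclude $-\alpha$ from $\spec(\Sc_2)$.
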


\begin{proof}
The $(2,1)$ and $(3,1)$ blocks of $\Q_\alpha$ vanish and $A_d$ is nonsingular,
so $\Q_\alpha$ is nonsingular precisely when
\[C=\left[\begin{smallmatrix}\Sc_1&-B\tr\\ B&\alpha I\end{smallmatrix}\right]\]
is.  Eliminating the $(1,1)$ block of $C$ gives
\[\det C=\det(\Sc_1)\det(\alpha I+B\Sc_1^{-1}B\tr)
       =\det(\Sc_1)\det(\alpha I+\Sc_2),\]
which vanishes exactly when $-\alpha\in\spec(\Sc_2)$. Suppose now $\Herm(\Sc_1)\succ0$ and $Cy=0$ with $y=(y_1,y_2)$.  Then
$\alpha y_2=-By_1$ and $\Sc_1y_1=B\tr y_2$, so
$y_1^*\Sc_1y_1=y_1^*B\tr y_2=-\alpha^{-1}\|By_1\|^2$.  Taking real parts,
$y_1^*\Herm(\Sc_1)y_1+\alpha^{-1}\|By_1\|^2=0$, whence $y_1=0$ and then
$y_2=0$.
\end{proof}

\begin{lemma}\label{lem:rhp}
If $\Herm(\Sc_1)$ is positive definite, then every eigenvalue $\theta$ of
$\Sc_2=B\Sc_1^{-1}B\tr$ has $\Real\theta>0$.
\end{lemma}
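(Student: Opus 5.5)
The plan is to take an eigenpair and convert the eigenvalue equation into a Rayleigh quotient involving $\Sc_1^{-1}$. The positivity of $\Herm(\Sc_1)$ then carries over to the inverse.

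Let $\theta$ be an eigenvalue of $\Sc_2=B\Sc_1^{-1}B\tr$ with eigenvector $x\in\C^{n^2}$, $x\neq0$, so $B\Sc_1^{-1}B\tr x=\theta x$. Since $\Herm(\Sc_1)\succ0$, $\Sc_1$ is nonsingular: if $\Sc_1y=0$, then $y^*\Herm(\Sc_1)y=\Real(y^*\Sc_1y)=0$, which forces $y=0$.

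Set $w=\Sc_1^{-1}B\tr x$, so that $B\tr x=\Sc_1 w$. Because $B$ has full row rank, $B\tr$ is injective, so $B\tr x\neq0$ and hence $w\neq0$. Multiplying the eigenvalue equation on the left by $x^*$ gives
\[
\theta\|x\|^2=x^*B\Sc_1^{-1}B\tr x=(B\tr x)^*w=(\Sc_1w)^*w=w^*\Sc_1\tr w .
\]
We have $(\Sc_1 w)^* = w^*\Sc_1^*$, and $\Sc_1$ is real, so $\Sc_1^*=\Sc_1\tr$. Taking real parts,
\[
\Real\theta\,\|x\|^2=\Real(w^*\Sc_1\tr w)=w^*\Herm(\Sc_1\tr)w=w^*\Herm(\Sc_1)w>0,
\]
using $\Herm(\Sc_1\tr)=\Herm(\Sc_1)$. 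Dividing by $\|x\|^2>0$ gives $\Real\theta>0$.

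The main point to handle with care is the nondegeneracy $w\neq0$. It rests on the full row rank of $B$ recorded in Section~\ref{sec:disc}; without it, $\Sc_2$ could have a zero eigenvalue. Beyond that, the argument only needs to track the conjugate transpose correctly for the real matrix $\Sc_1$, which is routine.
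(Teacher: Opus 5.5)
Your proof is correct and is essentially the paper's argument. Both set $w=\Sc_1^{-1}B\tr x$, use the full row rank of $B$ to get $w\neq0$, and identify $\Real\theta$ (up to the factor $\|x\|^2$) with $w^*\Herm(\Sc_1)w>0$; the paper computes $w^*\Sc_1w=\overline{\theta}$, whereas you compute its conjugate $x^*\Sc_2x=w^*\Sc_1\tr w$. Your explicit check that $\Herm(\Sc_1)\succ0$ makes $\Sc_1$ nonsingular is a small point the paper leaves implicit.
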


\begin{proof}
Let $\Sc_2y=\theta y$ with $\|y\|=1$ and put $w=\Sc_1^{-1}B\tr y$.  Since $B$
has full row rank, $B\tr y\neq0$, so $w\neq0$; moreover $\Sc_1w=B\tr y$ and
$Bw=\theta y$.  Hence $w^*\Sc_1w=w^*B\tr y=(Bw)^*y=\overline{\theta}$, and
taking real parts gives $\Real\theta=w^*\Herm(\Sc_1)w>0$.
\end{proof}

\begin{remark}\label{rem:assumption}
The hypothesis $\Herm(\Sc_1)\succ0$ is sufficient but not necessary, and for
the MAC matrices it is not always satisfied.  It holds at all but one of the parameter
pairs of Table~\ref{tab:assumption}, the exception being $\kappa=\nu=1$,
where it fails on the grids with $n\ge32$.  What fails there is the
definiteness of $\Herm(A_s)$ itself, which turns mildly indefinite once the
mesh resolves the interface layer; for $\kappa<1$ the positive definite
interface block $T=\mathcal{O}(\kappa^{-1})$ of \eqref{eq:GAG} below, which
enters $\Sc_1=A_s+GA_d^{-1}G\tr$, more than compensates at every other parameter
pair, and $\Herm(\Sc_1)\succ0$ is restored.  Section~\ref{sec:numerics} quantifies both
statements.  What is perhaps surprising is that the conclusion of
Lemma~\ref{lem:rhp} is satisfied in all of our experiments even where its
hypothesis is not.  In any case the clustering result of
Theorem~\ref{thm:cluster} does not use the hypothesis at all.
\end{remark}

\section{Spectral analysis}
\label{sec:spectral}

Throughout this section $A_d$ and $\Sc_1$ are nonsingular, $\alpha>0$, and
$-\alpha\notin\spec(\Sc_2)$, so that $\Q_\alpha$ is nonsingular by
Lemma~\ref{lem:invert}.  The matrices $\Q_\alpha^{-1}\K$ and
$\K\Q_\alpha^{-1}$ are similar, and it is convenient to work with the latter.

\subsection{The exact preconditioner}

\begin{theorem}\label{thm:blockform}
Let $R=\alpha\Sc_1+B\tr B$.  Then $R$ is nonsingular and
\begin{equation}\label{eq:KQinv}
  \K\Q_\alpha^{-1}=\begin{bmatrix} I & 0 & 0\\
    GA_d^{-1} & -I & 0\\ 0 & \alpha BR^{-1} & BR^{-1}B\tr\end{bmatrix}.
\end{equation}
Consequently
\begin{equation}\label{eq:specsplit}
  \spec(\Q_\alpha^{-1}\K)=\{1\}^{n^2}\cup\{-1\}^{2n^2-n}
    \cup\spec\!\left(BR^{-1}B\tr\right),
\end{equation}
where the exponents denote algebraic multiplicities.
\end{theorem}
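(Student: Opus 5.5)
The plan is to verify \eqref{eq:KQinv} in the form $\K=T\Q_\alpha$, where $T$ denotes the block lower triangular matrix on the right of \eqref{eq:KQinv}. This avoids computing $\Q_\alpha^{-1}$ explicitly. Before that I will establish that $R$ is nonsingular, since $T$ involves $R^{-1}$.

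\textbf{Invertibility of $R$.} I will factor $R=\alpha\Sc_1\bigl(I+\alpha^{-1}\Sc_1^{-1}B\tr B\bigr)$, using that $\Sc_1$ is nonsingular. Sylvester's determinant identity $\det(I+XY)=\det(I+YX)$ with $X=\Sc_1^{-1}B\tr$ and $Y=B$ then gives
\[
\det R=\alpha^{2n^2-n}\det(\Sc_1)\det\bigl(I+\alpha^{-1}B\Sc_1^{-1}B\tr\bigr)
=\alpha^{2n^2-n}\det(\Sc_1)\,\alpha^{-n^2}\det(\alpha I+\Sc_2).
\]
This is nonzero precisely under the standing assumption $-\alpha\notin\spec(\Sc_2)$, the same condition that appears in Lemma~\ref{lem:invert}.

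\textbf{Verifying $T\Q_\alpha=\K$.} I will multiply $T$ by $\Q_\alpha$ block row by block row.
\begin{itemize}
\item The first block row of the product is $[A_d,\ G\tr,\ 0]$, which is the first block row of $\K$.
\item The second block row is $[G,\ GA_d^{-1}G\tr-\Sc_1,\ B\tr]$. Since $\Sc_1=A_s+GA_d^{-1}G\tr$, the middle entry equals $-A_s$, as required.
\item The third block row is $[0,\ \alpha BR^{-1}\Sc_1+BR^{-1}B\tr B,\ -\alpha BR^{-1}B\tr+\alpha BR^{-1}B\tr]$. Its middle entry collapses to $BR^{-1}(\alpha\Sc_1+B\tr B)=BR^{-1}R=B$, and its last entry vanishes. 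This is the third block row of $\K$.
\end{itemize}
The third block row is the only step needing care: the particular choice $R=\alpha\Sc_1+B\tr B$, together with the sign flip in the $(2,3)$ block of $\Q_\alpha$, is exactly what makes the cancellations work. Since $\Q_\alpha$ is invertible, $\K\Q_\alpha^{-1}=T$.

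\textbf{The spectrum \eqref{eq:specsplit}.} I will use the similarity $\Q_\alpha^{-1}\K=\Q_\alpha^{-1}(\K\Q_\alpha^{-1})\Q_\alpha$, so the two matrices have the same eigenvalues with the same algebraic multiplicities. Because $T$ is block lower triangular, its characteristic polynomial is the product of those of its diagonal blocks $I_{n^2}$, $-I_{2n^2-n}$ and $BR^{-1}B\tr\in\R^{n^2\times n^2}$. The block sizes come from the dimensions of $A_d$, $A_s$ and $B$. This yields \eqref{eq:specsplit} with the stated multiplicities.
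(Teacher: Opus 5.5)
Your proof is correct, but it takes a slightly different route from the paper's.

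\textbf{The paper's route.} The paper writes $\Q_\alpha$ as a $2\times2$ block upper triangular matrix with diagonal blocks $A_d$ and $C=\left[\begin{smallmatrix}\Sc_1&-B\tr\\ B&\alpha I\end{smallmatrix}\right]$. It then forms $\Q_\alpha^{-1}$ blockwise and multiplies $\K$ by it row by row. For the third block row it solves $[X\;Y]\,C=[B\;0]$. That elimination is what produces $X=\alpha BR^{-1}$ and $Y=BR^{-1}B\tr$, so in the paper $R$ is derived rather than taken from the statement. Nonsingularity of $R$ is read off from $\det C=\alpha^{n-n^2}\det R$, obtained by eliminating the $\alpha I$ block of $C$. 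This is combined with nonsingularity of $C$, which is equivalent to that of $\Q_\alpha$ by the argument in Lemma~\ref{lem:invert}.

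\textbf{Your route.} You take the right-hand side of \eqref{eq:KQinv} as given and check $T\Q_\alpha=\K$ by direct multiplication. This needs no block inverse and makes the role of the sign of the $(2,3)$ block of $\Q_\alpha$ transparent. Your Sylvester-identity computation gives $\det R=\alpha^{n^2-n}\det(\Sc_1)\det(\alpha I+\Sc_2)$ directly. This agrees with the paper's two determinant formulas for $\det C$ taken together.

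\textbf{What each buys.} Your ordering shows $R$ is invertible before $R^{-1}$ ever appears. The paper writes $X=\alpha BR^{-1}$ first and justifies invertibility afterwards. In exchange, the paper's derivation explains where the matrix $R=\alpha\Sc_1+B\tr B$ comes from, which a pure verification does not.
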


\begin{proof}
Write $\Q_\alpha=\left[\begin{smallmatrix}A_d & \Gamma_{12}\\ 0 &
C\end{smallmatrix}\right]$ with
$\Gamma_{12}=\begin{bmatrix}G\tr&0\end{bmatrix}$ and
$C=\left[\begin{smallmatrix}\Sc_1&-B\tr\\ B&\alpha I\end{smallmatrix}\right]$,
so that
\begin{equation*}
\Q_\alpha^{-1}=\begin{bmatrix}A_d^{-1} &
  -A_d^{-1}\Gamma_{12}C^{-1}\\ 0 & C^{-1}\end{bmatrix}.
\end{equation*}
The first block row of $\K$ is $\begin{bmatrix}A_d&\Gamma_{12}\end{bmatrix}$,
and
\[
  \begin{bmatrix}A_d&\Gamma_{12}\end{bmatrix}\Q_\alpha^{-1}
  =\begin{bmatrix}I & -\Gamma_{12}C^{-1}+\Gamma_{12}C^{-1}\end{bmatrix}
  =\begin{bmatrix}I&0&0\end{bmatrix}.
\]
The second block row of $\K$ is
$\begin{bmatrix}G & -A_s & B\tr\end{bmatrix}$.  Its first component is
$GA_d^{-1}$, and its last two components are
\[
  \begin{aligned}
  -GA_d^{-1}\begin{bmatrix}G\tr&0\end{bmatrix}C^{-1}
  +\begin{bmatrix}-A_s & B\tr\end{bmatrix}C^{-1}
  &=\begin{bmatrix}-(A_s+GA_d^{-1}G\tr) & B\tr\end{bmatrix}C^{-1}
  =-\begin{bmatrix}\Sc_1 & -B\tr\end{bmatrix}C^{-1},
  \end{aligned}
\]
which equals $-\begin{bmatrix}I&0\end{bmatrix}$, because
$\begin{bmatrix}\Sc_1&-B\tr\end{bmatrix}$ is the first block row of $C$.
The third block row of $\K$ is $\begin{bmatrix}0&B&0\end{bmatrix}$, so it
remains to compute
$\begin{bmatrix}B&0\end{bmatrix}C^{-1}=:\begin{bmatrix}X&Y\end{bmatrix}$.  This
means $X\Sc_1+YB=B$ and $-XB\tr+\alpha Y=0$.  The second equation gives
$Y=\alpha^{-1}XB\tr$, and substituting it into the first gives
$X(\Sc_1+\alpha^{-1}B\tr B)=B$, that is, $X=\alpha BR^{-1}$ and
$Y=BR^{-1}B\tr$.  Nonsingularity of $R$ follows from that of $C$: eliminating
the $(2,2)$ block of $C$ gives
\[\det C=\det(\alpha I_{n^2})\det(\Sc_1+\alpha^{-1}B\tr B)
      =\alpha^{n^2}\alpha^{-(2n^2-n)}\det R=\alpha^{\,n-n^2}\det R.\]  This
proves \eqref{eq:KQinv}, and \eqref{eq:specsplit} follows because the spectrum
of a block triangular matrix is the union of the spectra of its diagonal
blocks.
\end{proof}

The next result puts the third group of eigenvalues  $\spec\!\left(BR^{-1}B\tr\right)$ in closed form.  It is the
observation on which the rest of the paper rests: the regularization $\alpha
I$ acts on the spectrum of $\Q_\alpha^{-1}\K$ as a M\"obius transformation
applied to the spectrum of the second Schur complement.

\begin{theorem}\label{thm:Q}
With $R=\alpha\Sc_1+B\tr B$ we have
\begin{equation}\label{eq:mobius}
  BR^{-1}B\tr=\Sc_2\,(\Sc_2+\alpha I)^{-1}.
\end{equation}
The $n^2$ remaining eigenvalues in \eqref{eq:specsplit} are therefore
\begin{equation}\label{eq:mus}
  \mu_i=\frac{\theta_i}{\theta_i+\alpha},\qquad i=1,\dots,n^2,
\end{equation}
where $\theta_1,\dots,\theta_{n^2}$ are the eigenvalues of
$\Sc_2=B\Sc_1^{-1}B\tr$, counted with algebraic multiplicity.  None of the
$\mu_i$ equals $1$, so the eigenvalue $1$ has multiplicity exactly $n^2$; none
equals $0$ when $\Sc_2$ is nonsingular; and
\[
  \spec(\Q_\alpha^{-1}\K)=\{1\}^{n^2}\cup\{-1\}^{2n^2-n}\cup
  \Big\{\tfrac{\theta_i}{\theta_i+\alpha}\Big\}_{i=1}^{n^2}.
\]
\end{theorem}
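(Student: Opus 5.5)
The plan is to prove the identity \eqref{eq:mobius} by a push-through argument, and then read the spectral statements off from it using Theorem~\ref{thm:blockform}. Write $R=\alpha\Sc_1+B\tr B=\Sc_1(\alpha I+\Sc_1^{-1}B\tr B)$. The first step is to check the intertwining relation
\[
  B\tr(\alpha I+B\Sc_1^{-1}B\tr)=(\alpha I+B\tr B\Sc_1^{-1})B\tr ,
\]
which holds because both sides expand to $\alpha B\tr+B\tr B\Sc_1^{-1}B\tr$. The factor on the right equals $R\Sc_1^{-1}$, so the relation reads $B\tr(\Sc_2+\alpha I)=R\Sc_1^{-1}B\tr$. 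Both $\Sc_2+\alpha I$ (by the standing assumption $-\alpha\notin\spec(\Sc_2)$) and $R$ (by Theorem~\ref{thm:blockform}) are invertible. Multiplying on the left by $R^{-1}$ and on the right by $(\Sc_2+\alpha I)^{-1}$ gives
\[
  R^{-1}B\tr=\Sc_1^{-1}B\tr(\Sc_2+\alpha I)^{-1}.
\]
Premultiplying by $B$ then yields $BR^{-1}B\tr=\Sc_2(\Sc_2+\alpha I)^{-1}$. The identity is exact, so there is no genuine obstacle. The only delicate point is to keep the factors in the right order, since $\Sc_1$ is nonsymmetric.

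For the eigenvalues, $\Sc_2$ and $(\Sc_2+\alpha I)^{-1}$ commute, and the right-hand side is $f(\Sc_2)$ with $f(z)=z/(z+\alpha)$. This $f$ is analytic on a neighbourhood of $\spec(\Sc_2)$, again because $-\alpha\notin\spec(\Sc_2)$. By the spectral mapping theorem for rational functions of a matrix, which can be seen from a Schur or Jordan triangularization of $\Sc_2$, the eigenvalues of $f(\Sc_2)$ are $f(\theta_i)$, and algebraic multiplicities are preserved. This gives the values $\mu_i$ in \eqref{eq:mus}.

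The remaining claims follow directly. First, $\mu_i=1$ would force $\theta_i=\theta_i+\alpha$, that is $\alpha=0$, which is excluded. Hence no $\mu_i$ equals $1$, and combining this with \eqref{eq:specsplit} shows that the eigenvalue $1$ has multiplicity exactly $n^2$. Second, $\mu_i=0$ if and only if $\theta_i=0$, and this cannot happen when $\Sc_2$ is nonsingular. Finally, substituting $\spec(BR^{-1}B\tr)=\{\mu_i\}$ into \eqref{eq:specsplit} gives the displayed formula for $\spec(\Q_\alpha^{-1}\K)$.
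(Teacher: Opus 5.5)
Your proof is correct, but it reaches \eqref{eq:mobius} by a different algebraic route than the paper.

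The paper expands $R^{-1}=(\alpha\Sc_1+B\tr B)^{-1}$ with the Sherman--Morrison--Woodbury formula and sandwiches the result between $B$ and $B\tr$. It then simplifies $\frac1\alpha\Sc_2-\frac1{\alpha^2}\Sc_2(I+\frac1\alpha\Sc_2)^{-1}\Sc_2$ down to $\Sc_2(\alpha I+\Sc_2)^{-1}$.

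You instead use the push-through relation $B\tr(\Sc_2+\alpha I)=R\Sc_1^{-1}B\tr$, which needs nothing beyond multiplying out. This gives the slightly stronger intermediate identity $R^{-1}B\tr=\Sc_1^{-1}B\tr(\Sc_2+\alpha I)^{-1}$, with no subtraction to simplify and with the factor order transparent for nonsymmetric $\Sc_1$.

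The price is that you must take the invertibility of $R$ as an input, which you correctly import from Theorem~\ref{thm:blockform}. The Woodbury route delivers that invertibility automatically from the invertibility of $\alpha\Sc_1$ and of $I+\alpha^{-1}\Sc_2$, together with an explicit formula for $R^{-1}$ itself.

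The spectral part is the same in both proofs: spectral mapping for $z\mapsto z/(z+\alpha)$. Your Schur-triangularization justification yields the eigenvalue list with algebraic multiplicities directly, so it does not need the injectivity remark the paper makes.
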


\begin{proof}
By the Sherman--Morrison--Woodbury formula,
\[
  R^{-1}=(\alpha\Sc_1+B\tr B)^{-1}
   =\frac1\alpha\Sc_1^{-1}
    -\frac1{\alpha^2}\Sc_1^{-1}B\tr
      \Big(I+\tfrac1\alpha B\Sc_1^{-1}B\tr\Big)^{-1}B\Sc_1^{-1}.
\]
Multiply by $B$ on the left and $B\tr$ on the right, and write
$\Sc_2=B\Sc_1^{-1}B\tr$:
\[
  \begin{aligned}
  BR^{-1}B\tr&=\frac1\alpha\Sc_2-\frac1{\alpha^2}
     \Sc_2\Big(I+\tfrac1\alpha\Sc_2\Big)^{-1}\Sc_2\\
   &=\frac1\alpha\Sc_2\Big(I+\tfrac1\alpha\Sc_2\Big)^{-1}
     \Big[\Big(I+\tfrac1\alpha\Sc_2\Big)-\tfrac1\alpha\Sc_2\Big]\\
   &=\Sc_2(\alpha I+\Sc_2)^{-1},
  \end{aligned}
\]
which is \eqref{eq:mobius}.  Thus $BR^{-1}B\tr=r(\Sc_2)$ for the rational
function $r(z)=z/(z+\alpha)$, which is analytic on a neighbourhood of
$\spec(\Sc_2)$ because $-\alpha\notin\spec(\Sc_2)$.  The spectral mapping
theorem gives $\spec(BR^{-1}B\tr)=r(\spec(\Sc_2))$, and since $r$ is
injective on that neighbourhood, the algebraic multiplicity of each
$r(\theta_i)$ equals that of $\theta_i$.
Finally $r(\theta)=1$ has no finite solution, and $r(\theta)=0$ only for
$\theta=0$.
\end{proof}

\begin{remark}
It is worth comparing Theorem~\ref{thm:Q} with Theorems~\ref{thm:M1} and
\ref{thm:M2}.  For $\M_1$ and $\M_2$ the eigenvalues that are not $\pm1$ are
the fixed algebraic numbers $(-1\pm\sqrt5)/2$.  These do not depend on the
problem at all, they have moduli $1.618$ and $0.618$, and no choice of
parameters moves them.  For $\Q_\alpha$ the corresponding group is a M\"obius
image of $\spec(\Sc_2)$, and decreasing $\alpha$ drives it towards $1$.  That
is the structural reason for the behaviour reported in
Section~\ref{sec:numerics}.
\end{remark}

\subsection{Localization and clustering}

\begin{theorem}\label{thm:cluster}
Assume that $\Sc_2$ is nonsingular, and set
$\theta_{\min}=\min_i|\theta_i|>0$.  If $0<\alpha\le\theta_{\min}/2$,
then every eigenvalue $\mu_i$ in \eqref{eq:mus} satisfies
\begin{equation}\label{eq:clusterbound}
  |1-\mu_i|=\frac{\alpha}{|\theta_i+\alpha|}\le\frac{2\alpha}{\theta_{\min}} .
\end{equation}
The third group of eigenvalues therefore collapses onto $1$ at the rate
$\mathcal{O}(\alpha)$, and for small $\alpha$ the spectrum of
$\Q_\alpha^{-1}\K$ consists of two clusters, one at $-1$ of size $2n^2-n$ and
one at $1$ of size $2n^2$.  
\end{theorem}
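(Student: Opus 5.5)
The argument is a direct consequence of the closed form in Theorem~\ref{thm:Q}. First I check that the standing assumption $-\alpha\notin\spec(\Sc_2)$ holds automatically under the hypothesis of the theorem: since $|\theta_i|\ge\theta_{\min}\ge 2\alpha>\alpha$, no eigenvalue can equal $-\alpha$. Hence $\Q_\alpha$ is nonsingular by Lemma~\ref{lem:invert}, and the eigenvalues $\mu_i=\theta_i/(\theta_i+\alpha)$ of \eqref{eq:mus} are well defined.

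Next I compute the deviation from $1$ algebraically. From
\[
1-\mu_i=\frac{(\theta_i+\alpha)-\theta_i}{\theta_i+\alpha}=\frac{\alpha}{\theta_i+\alpha},
\]
and since $\alpha>0$, taking moduli gives the identity in \eqref{eq:clusterbound}.

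The only real step is the lower bound on the denominator, which I get from the reverse triangle inequality:
\[
|\theta_i+\alpha|\ge|\theta_i|-\alpha\ge\theta_{\min}-\alpha\ge\theta_{\min}-\tfrac12\theta_{\min}=\tfrac12\theta_{\min}.
\]
Substituting this into $\alpha/|\theta_i+\alpha|$ yields $|1-\mu_i|\le 2\alpha/\theta_{\min}$. The argument does not use Lemma~\ref{lem:rhp} or any sign information on $\Real\theta_i$, which is consistent with Remark~\ref{rem:assumption}. If $\Real\theta_i>0$, one even has $|\theta_i+\alpha|\ge|\theta_i|$ and hence the sharper bound $\alpha/\theta_{\min}$, but this is not needed.

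Finally, for the clustering statement I combine the bound with \eqref{eq:specsplit} of Theorem~\ref{thm:blockform}. The eigenvalue $-1$ has exact multiplicity $2n^2-n$. The eigenvalue $1$ has multiplicity $n^2$, and it is joined by the $n^2$ values $\mu_i$, which lie within $2\alpha/\theta_{\min}=\mathcal{O}(\alpha)$ of $1$; together these give $2n^2$ eigenvalues near $1$. No step is a serious obstacle. The only point needing care is that $\theta_{\min}$ refers to moduli of possibly complex eigenvalues, so I use the reverse triangle inequality rather than any ordering of real parts.
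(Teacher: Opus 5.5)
Your proof is correct and follows the paper's argument exactly: the identity $1-\mu_i=\alpha/(\theta_i+\alpha)$, followed by the reverse triangle inequality $|\theta_i+\alpha|\ge|\theta_i|-\alpha\ge\theta_{\min}/2$. Your extra check is a small addition the paper leaves to its standing assumption: the hypothesis $\alpha\le\theta_{\min}/2$ already rules out $-\alpha\in\spec(\Sc_2)$, and the resulting bound $|1-\mu_i|\le 1$ keeps every $\mu_i$ away from $-1$.
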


\begin{proof}
By \eqref{eq:mus}, $1-\mu_i=\alpha/(\theta_i+\alpha)$.  If
$\alpha\le\theta_{\min}/2$ then
$|\theta_i+\alpha|\ge|\theta_i|-\alpha\ge\theta_{\min}/2$.
\end{proof}

Under the sufficient condition of Lemma~\ref{lem:rhp} one can say considerably
more, and for every $\alpha>0$ rather than only for small $\alpha$.

\begin{proposition}\label{prop:disk}
If $\Herm(\Sc_1)\succ0$, then for every $\alpha>0$ all the eigenvalues
$\mu_i$ of \eqref{eq:mus} lie in the open disk
$\mathbb{D}=\{z\in\C:|z-\tfrac12|<\tfrac12\}$, so that in particular
$0<\Real\mu_i<1$ and $|\mu_i|<1$.  Moreover
\begin{equation}\label{eq:sharpbound}
  |1-\mu_i|\le\frac{\alpha}{\sqrt{\theta_{\min}^2+\alpha^2}}<1 .
\end{equation}
\end{proposition}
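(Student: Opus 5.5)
The plan rests on Lemma~\ref{lem:rhp}, which gives $\Real\theta_i>0$ for every eigenvalue of $\Sc_2$, together with the explicit formula \eqref{eq:mus}. First I will note that $\Real\theta_i>0$ implies $-\alpha\notin\spec(\Sc_2)$ for every $\alpha>0$. Hence the standing assumption of this section holds automatically, consistent with Lemma~\ref{lem:invert}, and each $\mu_i=\theta_i/(\theta_i+\alpha)$ is well defined. The rest of the argument is a statement about the Möbius map $r(z)=z/(z+\alpha)$ restricted to the open right half-plane.

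For the disk inclusion I will compute directly. Write $\theta=\theta_i$. Then
\[
\mu-\tfrac12=\frac{2\theta-(\theta+\alpha)}{2(\theta+\alpha)}=\frac{\theta-\alpha}{2(\theta+\alpha)},
\]
so $|\mu-\tfrac12|<\tfrac12$ is equivalent to $|\theta-\alpha|<|\theta+\alpha|$. Expanding both moduli squared, this reduces to $4\alpha\Real\theta>0$, which holds because $\alpha>0$ and $\Real\theta>0$. The consequences then follow from elementary facts about the disk $\mathbb{D}$. Every point of $\mathbb{D}$ has real part in $(0,1)$. Also $\mathbb{D}$ is contained in the closed unit disk and touches its boundary only at $z=1$, which is not in the open disk, so $|\mu_i|<1$.

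For \eqref{eq:sharpbound} I start from $|1-\mu_i|=\alpha/|\theta+\alpha|$. Writing $\theta=a+ib$ with $a>0$, I get $|\theta+\alpha|^2=a^2+2a\alpha+\alpha^2+b^2>|\theta|^2+\alpha^2\ge\theta_{\min}^2+\alpha^2$. This yields $|1-\mu_i|<\alpha/\sqrt{\theta_{\min}^2+\alpha^2}$, which is even strict. It is $<1$ because $\theta_{\min}>0$, and $\theta_{\min}>0$ holds since $\Sc_2$ is nonsingular: no eigenvalue can be $0$ when $\Real\theta>0$.

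There is no real obstacle. The one point needing care is the step that turns the strict inequality $\Real\theta>0$ into the cross term $2a\alpha>0$ in $|\theta+\alpha|^2$. That cross term is what makes the bound hold for every $\alpha>0$, with no smallness assumption on $\alpha$ as in Theorem~\ref{thm:cluster}.
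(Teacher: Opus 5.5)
Your proof is correct and is essentially the paper's own argument: Lemma~\ref{lem:rhp} gives $\Real\theta_i>0$, the disk inclusion reduces to $|\theta-\alpha|<|\theta+\alpha|$, i.e.\ to $4\alpha\Real\theta>0$, and \eqref{eq:sharpbound} follows from $|\theta+\alpha|^2=|\theta|^2+2\alpha\Real\theta+\alpha^2\ge\theta_{\min}^2+\alpha^2$. Your two small additions are both valid: $-\alpha\notin\spec(\Sc_2)$ holds automatically for every $\alpha>0$, and the positive cross term in fact makes \eqref{eq:sharpbound} strict.
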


\begin{proof}
By Lemma~\ref{lem:rhp}, $\Real\theta_i>0$ for every $i$; in particular no
$\theta_i$ vanishes, so $\theta_{\min}=\min_i|\theta_i|>0$.  Fix $i$ and
write $\theta=\theta_i$ and $\mu=\mu_i=\theta/(\theta+\alpha)$.

For the first claim,
\[
  \Big|\mu-\tfrac12\Big|^2-\tfrac14
  =\frac{|\theta-\alpha|^2-|\theta+\alpha|^2}{4|\theta+\alpha|^2}
  =\frac{-4\alpha\Real\theta}{4|\theta+\alpha|^2}<0 ,
\]
so $\mu$ lies in $\mathbb{D}$, and the two consequences
$0<\Real\mu<1$ and $|\mu|<1$ follow because $\mathbb{D}$ is contained in
the strip $0<\Real z<1$ and in the unit disk.

For \eqref{eq:sharpbound}, subtracting from $1$ gives
\begin{equation}\label{eq:onemmu}
  1-\mu=\frac{(\theta+\alpha)-\theta}{\theta+\alpha}
        =\frac{\alpha}{\theta+\alpha},
  \qquad\text{so}\qquad
  |1-\mu|=\frac{\alpha}{|\theta+\alpha|},
\end{equation}
the last step because $\alpha>0$ is real.  Expanding the denominator,
\[
  |\theta+\alpha|^2=(\theta+\alpha)\overline{(\theta+\alpha)}
  =|\theta|^2+\alpha\big(\theta+\overline{\theta}\big)+\alpha^2
  =|\theta|^2+2\alpha\Real\theta+\alpha^2 .
\]
The cross term $2\alpha\Real\theta$ is positive, since $\alpha>0$ and
$\Real\theta>0$, and $|\theta|\ge\theta_{\min}$ by the definition of
$\theta_{\min}$; hence
\[
  |\theta+\alpha|^2\ \ge\ |\theta|^2+\alpha^2\ \ge\
  \theta_{\min}^2+\alpha^2 .
\]
Taking square roots and inserting this in \eqref{eq:onemmu} gives
$|1-\mu|\le\alpha/\sqrt{\theta_{\min}^2+\alpha^2}$, which is $<1$
because $\theta_{\min}>0$.  Since $i$ was arbitrary, \eqref{eq:sharpbound}
holds for every eigenvalue in \eqref{eq:mus}.
\end{proof}

\begin{remark}
The bound \eqref{eq:sharpbound} is sharper than the clustering bound
\eqref{eq:clusterbound} and needs no restriction on \[\alpha:
\alpha/\sqrt{\theta_{\min}^2+\alpha^2}\le\alpha/\theta_{\min}
\le2\alpha/\theta_{\min}\] for every $\alpha>0$.  It is also asymptotically
exact, since the inequalities above are equalities in the limit
$\Real\theta\downarrow0$, $|\theta|\to\theta_{\min}$; Table~\ref{tab:bound}
measures the gap.
\end{remark}

\begin{remark}\label{rem:gmres}
The two clusters at $\pm1$ already account for $3n^2-n$ of the $4n^2-n$
eigenvalues, and the remaining $n^2$ join the cluster at $1$ as
$\alpha\downarrow0$.  The matrix $\K\Q_\alpha^{-1}$ is not normal, so its eigenvalues alone do not
determine the GMRES convergence curve \cite{BenziGolubLiesen2005}, and we make
no rigorous statement about the convergence rate.  What we can say is that in
the experiments of Section~\ref{sec:numerics} the two-cluster structure is
accompanied by iteration counts of around nine, independently of the mesh.
\end{remark}

\subsection{The inexact preconditioner}
\label{sec:inexact}

In computations $\Sc_1=A_s+GA_d^{-1}G\tr$ is replaced by an approximation
$\Sch_1$, which gives
\begin{equation}\label{eq:Qhat}
  \Qh_\alpha=\begin{bmatrix}A_d & G\tr & 0\\
    0 & \Sch_1 & -B\tr\\ 0 & B & \alpha I\end{bmatrix},
  \qquad E:=\Sc_1-\Sch_1 .
\end{equation}
By \eqref{eq:G} the term $GA_d^{-1}G\tr$ is nonzero only in the $n\times n$
diagonal block belonging to the interface velocities $v_h^\Gamma$, and the
approximation of Section~\ref{sec:practical} touches that block alone.  Hence:

\begin{lemma}\label{lem:ranknE}
$E$ is supported on the $n$ rows and columns corresponding to $v_h^\Gamma$.
In particular $\rank(E)\le n=\mathcal{O}(\sqrt N)$, and $\|E\|_2$ is the
norm of the perturbation made in that single $n\times n$ block.
\end{lemma}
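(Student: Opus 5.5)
The plan is to write $E=\Sc_1-\Sch_1$ as a difference of two matrices, each of which differs from $A_s$ only in the interface block, and then read off the support.

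\emph{Step 1: $\Sc_1$ only modifies the interface block.} By \eqref{eq:schurs}, $\Sc_1=A_s+GA_d^{-1}G\tr$. From \eqref{eq:G}, in the velocity ordering $[u_h;v_h^\Gamma;v_h^{\mathrm{int}}]$ we can factor $G=P\,W$, where $P\in\R^{(2n^2-n)\times n}$ is the column-selection matrix that extracts the $v_h^\Gamma$ rows, and $W=-\tfrac1h\begin{bmatrix}0&I_n\end{bmatrix}\in\R^{n\times n^2}$ picks out the interface potentials. Then
\[
GA_d^{-1}G\tr=P\,(WA_d^{-1}W\tr)\,P\tr=P\,T\,P\tr ,
\]
where $T=h^{-2}(A_d^{-1})_{22}$ is $n\times n$. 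So $GA_d^{-1}G\tr$ vanishes outside the $(v_h^\Gamma,v_h^\Gamma)$ diagonal block, which is the fact already noted before the lemma.

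\emph{Step 2: structure of the approximation.} By the construction of Section~\ref{sec:practical}, which we take as given in the form announced in the text, $\Sch_1=A_s+P\widehat T P\tr$. Here $\widehat T$ is an $n\times n$ approximation of $T$, for instance one built from an incomplete Cholesky factor of $A_d$. The only property needed is that $\Sch_1$ agrees with $A_s$ off that block. Subtracting gives
\[
E=P\,(T-\widehat T)\,P\tr .
\]

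\emph{Step 3: the three conclusions.} The factorization in Step 2 shows at once that $E$ is supported on the rows and columns of $v_h^\Gamma$. It also gives $\rank E\le\rank P=n$, and $n=\mathcal{O}(\sqrt N)$ because $N=4n^2-n$. Finally, $P$ has orthonormal columns, so $\|E\|_2=\|T-\widehat T\|_2$: for any $x$ we have $\|P y\|=\|y\|$ and $\|P\tr x\|\le\|x\|$, with equality when $x$ lies in the range of $P$.

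The only real obstacle is Step 2. The statement depends on the specific approximation of Section~\ref{sec:practical} leaving $A_s$ untouched. I will state this assumption explicitly and check it against that construction.
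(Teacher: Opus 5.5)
Your proposal is correct and follows essentially the same argument as the paper. By \eqref{eq:G}, $GA_d^{-1}G\tr$ lives only in the $v_h^\Gamma$ diagonal block, and the approximation \eqref{eq:S1hat} changes only that block, so $E=\diag(0,T-\widehat T,0)$. This gives the support, $\rank(E)\le n$ and $\|E\|_2=\|T-\widehat T\|_2$; your factorization $E=P(T-\widehat T)P\tr$ with an orthonormal selection matrix $P$ simply makes the rank and norm steps explicit.
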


The theorem below describes the whole spectrum of $\Qh_\alpha^{-1}\K$ by means
of a scalar quadratic equation whose coefficients are Rayleigh quotients.  The
exact preconditioner is the special case $E=0$, and the perturbation bound
that follows is an easy consequence.

\begin{theorem}\label{thm:inexact}
Let $\Qh_\alpha$ be nonsingular.  Then
\begin{enumerate}
\item[(i)] $\lambda=1$ is an eigenvalue of $\Qh_\alpha^{-1}\K$ with algebraic
  multiplicity at least $n^2$;
\item[(ii)] if $\Sc_2$ is nonsingular, $\lambda=0$ is not an eigenvalue;
\item[(iii)] for every eigenvalue $\lambda\notin\{0,1\}$ there is a vector
  $y\in\C^{2n^2-n}$ with $\|y\|=1$ such that
  \begin{equation}\label{eq:quadratic}
    (\hat s+\gamma)\,\lambda^2+(\hat s+\varepsilon)\,\lambda-\gamma=0,
    \qquad
    \hat s=y^*\Sch_1y,\quad \gamma=\frac{\|By\|^2}{\alpha},\quad
    \varepsilon=y^*Ey .
  \end{equation}
\end{enumerate}
If $E=0$, the roots of \eqref{eq:quadratic} are $\lambda=-1$ and
$\lambda=\gamma/(\hat s+\gamma)$, in agreement with
Theorems~\ref{thm:blockform} and \ref{thm:Q}.
\end{theorem}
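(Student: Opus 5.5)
The plan is to work directly with the generalized eigenproblem $\K x=\lambda\Qh_\alpha x$, with $x=(x_1,x_2,x_3)\neq0$ partitioned as in \eqref{eq:K}. Written blockwise, and using that the first block rows of $\K$ and $\Qh_\alpha$ coincide, it reads
\[
(1-\lambda)(A_dx_1+G\tr x_2)=0,\quad
Gx_1-A_sx_2+B\tr x_3=\lambda(\Sch_1x_2-B\tr x_3),\quad
Bx_2=\lambda(Bx_2+\alpha x_3).
\]

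For (i), algebraic multiplicity is most easily seen through the similar matrix $\K\Qh_\alpha^{-1}$, exactly as in the proof of Theorem~\ref{thm:blockform}. The first block row of $\K$ equals the first block row $\begin{bmatrix}A_d&G\tr&0\end{bmatrix}$ of $\Qh_\alpha$. Hence the first block row of $\K\Qh_\alpha^{-1}$ is $\begin{bmatrix}I&0&0\end{bmatrix}$, and this does not depend on $\Sch_1$. Expanding the characteristic polynomial along these $n^2$ rows gives a factor $(z-1)^{n^2}$, which proves (i).

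For (ii), $\Qh_\alpha$ is nonsingular by hypothesis, so it suffices that $\K$ is nonsingular. This follows from the factorization \eqref{eq:ldu}: $\det\K=\det A_d\,\det(-\Sc_1)\,\det\Sc_2\neq0$, since $A_d$ and $\Sc_1$ are nonsingular throughout this section and $\Sc_2$ is nonsingular by assumption.

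For (iii), take $\lambda\notin\{0,1\}$.
\begin{enumerate}
\item The first equation gives $x_1=-A_d^{-1}G\tr x_2$.
\item The third gives $x_3=\frac{1-\lambda}{\lambda\alpha}Bx_2$.
\item These show $x_2\neq0$, since otherwise $x=0$. Set $y=x_2/\|x_2\|$.
\item Substituting into the second equation, the $G$-terms combine as $Gx_1-A_sx_2=-(A_s+GA_d^{-1}G\tr)x_2=-\Sc_1x_2$. This yields $-\Sc_1y+(1+\lambda)B\tr x_3/\|x_2\|=\lambda\Sch_1y$.
\item Multiplying by $\lambda$ and inserting $x_3$ gives
\[-\lambda\Sc_1y+\frac{1-\lambda^2}{\alpha}B\tr By-\lambda^2\Sch_1y=0.\]
\item Left-multiply by $y^*$ and use $y^*\Sc_1y=\hat s+\varepsilon$ (because $\Sc_1=\Sch_1+E$) and $y^*B\tr By=\|By\|^2=\alpha\gamma$. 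This gives $-\lambda(\hat s+\varepsilon)+(1-\lambda^2)\gamma-\lambda^2\hat s=0$, which is \eqref{eq:quadratic} up to sign.
\end{enumerate}

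For $E=0$ the quadratic factors as $(\lambda+1)\big((\hat s+\gamma)\lambda-\gamma\big)$, which one checks by expanding. Its roots are $-1$ and $\gamma/(\hat s+\gamma)$, consistent with Theorems~\ref{thm:blockform} and \ref{thm:Q}.

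No step is a real obstacle. The only point needing care is (i): it concerns algebraic multiplicity, which the eigenvector argument would undercount because $\ker G$ has dimension only $n^2-n$. That is why I would use the block-row structure of $\K\Qh_\alpha^{-1}$ for (i) rather than counting eigenvectors.
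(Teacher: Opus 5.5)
Your proof is correct and follows the paper's own argument. You prove (i) from the first block row of $\K\Qh_\alpha^{-1}$ being $\begin{bmatrix}I&0&0\end{bmatrix}$, and (iii) by the same elimination of $x_1,x_3$ followed by a Rayleigh quotient with $y^*\Sc_1y=\hat s+\varepsilon$. Your determinant argument for (ii) is exactly the alternative the paper records in parentheses after its eigenvector chase.

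Only your closing aside is off. Since $\K-\Qh_\alpha$ has just $3n^2-n$ nonzero rows, rank--nullity already gives at least $n^2$ independent eigenvectors for $\lambda=1$. It is only the special family $(x_1,0,0)$ with $x_1\in\ker G$ that falls short at $n^2-n$. This does not affect your proof.
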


\begin{proof}
As at the start of Section~\ref{sec:spectral}, $\Qh_\alpha^{-1}\K$ and
$\K\Qh_\alpha^{-1}$ are similar, so we may argue with the latter.  The first
block row of $\K$ is $\begin{bmatrix}A_d&\Gamma_{12}\end{bmatrix}$, and
$\Qh_\alpha$ reproduces it exactly, since the approximation $\Sch_1$ enters
only the second block row.  The computation in the proof of
Theorem~\ref{thm:blockform} therefore still gives
$\begin{bmatrix}A_d&\Gamma_{12}\end{bmatrix}\Qh_\alpha^{-1}
=\begin{bmatrix}I&0&0\end{bmatrix}$, so $\K\Qh_\alpha^{-1}$ is block lower
triangular with leading diagonal block $I_{n^2}$, which proves (i).

For (ii) and (iii) consider $\K w=\lambda\Qh_\alpha w$ with
$w=(w_1,w_2,w_3)\neq0$.  The three block rows are
\begin{subequations}\label{eq:eig}
\begin{align}
  (1-\lambda)\,(A_dw_1+G\tr w_2) &= 0,\label{eq:eig1}\\
  Gw_1-A_sw_2+B\tr w_3 &= \lambda(\Sch_1w_2-B\tr w_3),\label{eq:eig2}\\
  Bw_2 &= \lambda(Bw_2+\alpha w_3).\label{eq:eig3}
\end{align}
\end{subequations}
Suppose $\lambda=0$.  Then \eqref{eq:eig3} gives $Bw_2=0$, and since
$1-\lambda\neq0$, \eqref{eq:eig1} gives $w_1=-A_d^{-1}G\tr w_2$.  Substituting
both into \eqref{eq:eig2} yields $\Sc_1w_2=B\tr w_3$, so that
$w_2=\Sc_1^{-1}B\tr w_3$ and
\[
  0=Bw_2=B\Sc_1^{-1}B\tr w_3=\Sc_2w_3 .
\]
If $\Sc_2$ is nonsingular this forces $w_3=0$; then $\Sc_1w_2=0$ gives
$w_2=0$, and finally $w_1=0$, contradicting $w\neq0$.  This proves (ii).
(Equivalently, $\lambda=0$ is an eigenvalue of $\Qh_\alpha^{-1}\K$ exactly
when $\K$ is singular, and \eqref{eq:ldu} gives
\[\det\K=\det A_d\,\det(-\Sc_1)\,\det\Sc_2,\] so the hypothesis on $\Sc_2$
cannot be dropped.)

Let now $\lambda\notin\{0,1\}$.  From \eqref{eq:eig1} we have
$w_1=-A_d^{-1}G\tr w_2$, so the left-hand side of \eqref{eq:eig2} is
\[-(A_s+GA_d^{-1}G\tr)w_2+B\tr w_3=-\Sc_1w_2+B\tr w_3,\] and \eqref{eq:eig2}
becomes
\begin{equation}\label{eq:eig2b}
  (\lambda\Sch_1+\Sc_1)w_2=(1+\lambda)B\tr w_3 .
\end{equation}
Equation \eqref{eq:eig3} gives $w_3=\alpha^{-1}\lambda^{-1}(1-\lambda)Bw_2$.
Note that $w_2\neq0$, since $w_2=0$ would force $w_3=0$ and then $w_1=0$; we
may therefore normalize $\|w_2\|=1$.  Substituting $w_3$ into
\eqref{eq:eig2b} and writing $\Sc_1=\Sch_1+E$,
\[
  \lambda(1+\lambda)\Sch_1w_2+\lambda Ew_2
    =\frac{(1-\lambda)(1+\lambda)}{\alpha}B\tr Bw_2 .
\]
Multiplying on the left by $w_2^*$ and setting $y=w_2$,
$\hat s=y^*\Sch_1y$, $\varepsilon=y^*Ey$ and
$\gamma=\alpha^{-1}y^*B\tr By=\alpha^{-1}\|By\|^2$ gives
$\lambda(1+\lambda)\hat s+\lambda\varepsilon=(1-\lambda^2)\gamma$, which is
\eqref{eq:quadratic}.  When $E=0$ we have $\varepsilon=0$ and
\eqref{eq:quadratic} factors as
$(1+\lambda)\big[(\hat s+\gamma)\lambda-\gamma\big]=0$.
\end{proof}

\begin{corollary}\label{cor:pert}
Let $\lambda\notin\{0,1\}$ be an eigenvalue of $\Qh_\alpha^{-1}\K$, let
$y$, $\hat s$, $\gamma$, $\varepsilon$ be as in Theorem~\ref{thm:inexact},
assume $\hat s+\gamma\neq0$, and put $\mu_0=\gamma/(\hat s+\gamma)$.  Then
\begin{equation}\label{eq:pertbound}
  \min\big\{|\lambda+1|,\;|\lambda-\mu_0|\big\}
  \;\le\;\left(\frac{|\lambda|\,\|E\|_2}{|\hat s+\gamma|}\right)^{1/2}.
\end{equation}
If $\Herm(\Sch_1)\succ0$, the assumption $\hat s+\gamma\neq0$ is automatic,
$|\hat s+\gamma|\ge\Real\hat s\ge\lambda_{\min}(\Herm(\Sch_1))$, and $\mu_0$
lies in the closed disk \[\overline{\mathbb{D}}=\{z:|z-\tfrac12|\le\tfrac12\},\]
so every eigenvalue of $\Qh_\alpha^{-1}\K$ other than $1$ lies within a
distance $\big(\|E\|_2\,|\lambda|/\lambda_{\min}(\Herm(\Sch_1))\big)^{1/2}$
of the set $\{-1\}\cup\overline{\mathbb{D}}$.
\end{corollary}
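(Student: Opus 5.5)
The plan is to factor the quadratic \eqref{eq:quadratic} around its unperturbed roots and then control the two factors. Rewrite \eqref{eq:quadratic} as
\[
  (\hat s+\gamma)\lambda^2+\hat s\lambda-\gamma \;=\; -\varepsilon\lambda .
\]
For $E=0$, Theorem~\ref{thm:inexact} shows that the left-hand side factors as $(1+\lambda)\big[(\hat s+\gamma)\lambda-\gamma\big]$. Since $\hat s+\gamma\neq0$, this equals $(\hat s+\gamma)(\lambda+1)(\lambda-\mu_0)$ with $\mu_0=\gamma/(\hat s+\gamma)$. Hence
\[
  |\lambda+1|\,|\lambda-\mu_0| \;=\; \frac{|\varepsilon|\,|\lambda|}{|\hat s+\gamma|} .
\]
Because $\|y\|=1$, we have $|\varepsilon|=|y^*Ey|\le\|E\|_2$. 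The minimum of two nonnegative numbers is at most the square root of their product, and this gives \eqref{eq:pertbound} at once. This part is routine; the only care needed is that $\hat s$, $\gamma$, $\varepsilon$ come from the same vector $y$ for the given $\lambda$, which Theorem~\ref{thm:inexact}(iii) supplies.

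For the second part, assume $\Herm(\Sch_1)\succ0$. First, $\gamma=\|By\|^2/\alpha\ge0$ is real, and $\Real\hat s=y^*\Herm(\Sch_1)y\ge\lambda_{\min}(\Herm(\Sch_1))>0$. Therefore $|\hat s+\gamma|\ge\Real(\hat s+\gamma)\ge\Real\hat s\ge\lambda_{\min}(\Herm(\Sch_1))$, which in particular gives $\hat s+\gamma\neq0$.

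Next, to place $\mu_0$ in $\overline{\mathbb{D}}$, I would repeat the computation from the proof of Proposition~\ref{prop:disk}. Writing $\mu_0=\gamma/(\gamma+\hat s)$, one finds
\[
  \big|\mu_0-\tfrac12\big|^2-\tfrac14
  =\frac{|\gamma-\hat s|^2-|\gamma+\hat s|^2}{4|\gamma+\hat s|^2}
  =\frac{-4\gamma\Real\hat s}{4|\gamma+\hat s|^2}\le0 .
\]
The inequality here is non-strict, because $\gamma$ may vanish, for instance when $By=0$. This is exactly why the statement uses the closed disk.

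Finally, I would insert the lower bound on $|\hat s+\gamma|$ into \eqref{eq:pertbound}. This shows that $\lambda$ is within $\big(\|E\|_2|\lambda|/\lambda_{\min}(\Herm(\Sch_1))\big)^{1/2}$ of either $-1$ or $\mu_0\in\overline{\mathbb{D}}$, and hence of the set $\{-1\}\cup\overline{\mathbb{D}}$. The excluded eigenvalue $\lambda=0$ needs a word. If $0$ were an eigenvalue, it lies in $\overline{\mathbb{D}}$, so the claim about eigenvalues other than $1$ holds trivially for it. Alternatively, Theorem~\ref{thm:inexact}(ii) rules out $\lambda=0$ when $\Sc_2$ is nonsingular. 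The eigenvalue $1$ is excluded by the statement.

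I expect no serious obstacle; the main point is noticing the factorization of the $E=0$ quadratic.
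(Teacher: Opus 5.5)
Your proof is correct and follows the paper's argument: rewrite the quadratic as $(\hat s+\gamma)(\lambda+1)(\lambda-\mu_0)=-\lambda\varepsilon$, bound $|\varepsilon|\le\|E\|_2$ and take the geometric mean, then use $\gamma\ge0$ with the same disk computation as in Proposition~\ref{prop:disk}. Your observation that a possible eigenvalue $\lambda=0$ already lies in $\overline{\mathbb{D}}$ neatly covers a case that the paper's final sentence leaves unaddressed.
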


\begin{proof}
The monic quadratic with roots $-1$ and $\mu_0$ is
$\lambda^2+(1-\mu_0)\lambda-\mu_0$, and
$(\hat s+\gamma)(1-\mu_0)=\hat s$, $(\hat s+\gamma)\mu_0=\gamma$; hence
\eqref{eq:quadratic} can be rewritten as
\[(\hat s+\gamma)(\lambda+1)(\lambda-\mu_0)=-\lambda\varepsilon.\]  Taking
absolute values and using $|\varepsilon|\le\|E\|_2$ gives
\[|\lambda+1|\,|\lambda-\mu_0|\le|\lambda|\|E\|_2/|\hat s+\gamma|,\] and the
smaller of two nonnegative numbers never exceeds the square root of their
product.  For the last statement,
\[|\hat s+\gamma|\ge\Real(\hat s+\gamma)=y^*\Herm(\Sch_1)y+\gamma
 \ge\lambda_{\min}(\Herm(\Sch_1))\] because $\gamma\ge0$.  Writing
$\hat s=a+\mathrm{i}b$ with $a=y^*\Herm(\Sch_1)y>0$ gives
\[|\mu_0-\tfrac12|^2-\tfrac14
 =\big(|\gamma-\hat s|^2-|\gamma+\hat s|^2\big)/\big(4|\gamma+\hat s|^2\big)
 =-\gamma a/|\gamma+\hat s|^2\le0,\]
so $\mu_0\in\overline{\mathbb{D}}$.  Note that $\mu_0$ need not be real, since
$\Sch_1$ is nonsymmetric and $\hat s$ is a complex Rayleigh quotient.
\end{proof}

In words, Corollary~\ref{cor:pert} says that the quality of the preconditioner
is controlled by one small matrix, the $n\times n$ interface block: every
eigenvalue of the inexact preconditioned matrix lies within
$\mathcal{O}\big((|\lambda|\,\|E\|)^{1/2}\big)$ of the set
$\{-1\}\cup\overline{\mathbb{D}}$ that contains the exact spectrum.  The
bound is stated relative to that set rather than to the individual exact
eigenvalues, since $\mu_0$ is built from the perturbed eigenvector and need
not itself be an eigenvalue of $\Q_\alpha^{-1}\K$.  The experiments in
Section~\ref{sec:numerics} verify this.

\section{Practical implementation}
\label{sec:practical}

\subsection{Approximation of the first Schur complement}

The sparsity of $G$ in \eqref{eq:G} gives
\begin{equation}\label{eq:GAG}
  GA_d^{-1}G\tr=\begin{bmatrix}0&0&0\\ 0&T&0\\ 0&0&0\end{bmatrix},
  \qquad T=\frac{1}{h^2}\big(A_d^{-1}\big)_{\Gamma\Gamma}\in\R^{n\times n},
\end{equation}
the subscript $\Gamma\Gamma$ denoting the trailing $n\times n$ block.
Following \cite{GreifHe2023}, let $A_d=FF\tr$ be the Cholesky factorization
partitioned as
$F=\left[\begin{smallmatrix}F_{11}&0\\ F_{21}&F_{22}\end{smallmatrix}\right]$
with $F_{22}\in\R^{n\times n}$; then $T=h^{-2}F_{22}^{-T}F_{22}^{-1}$.  The
identity requires the factor in the natural ordering with the interface
unknowns last, in which $F$ fills the entire band of $A_d$: $\mathcal{O}(n^3)$
storage and $\mathcal{O}(n^4)=\mathcal{O}(N^2)$ work, far more than a
fill-reducing sparse factorization of $A_d$ costs.  (Computing $T$ column by
column from such a factorization is possible, and is how the ``exact'' rows of
Table~\ref{tab:inexact} were obtained, but it takes $n$ solves with $A_d$ per
setup.)  Following \cite{GreifHe2023} we therefore compute an incomplete
Cholesky factorization $A_d\approx\widetilde F\widetilde F\tr$ in that
ordering, with drop tolerance $\varepsilon_{\rm IC}$, and take
\begin{equation}\label{eq:S1hat}
  \widehat T=\frac{1}{h^2}\widetilde F_{22}^{-T}\widetilde F_{22}^{-1},
  \qquad
  \Sch_1=\begin{bmatrix}A_{11}&A_{12}&0\\ 0&A_{22}+\widehat T&A_{23}\\
     0&A_{32}&A_{33}\end{bmatrix} .
\end{equation}
Only an $n\times n$ triangular factor is inverted here, at negligible cost.
Lemma~\ref{lem:ranknE} applies with $E=\diag(0,T-\widehat T,0)$, so that
$\|E\|_2=\|T-\widehat T\|_2$.

\subsection{Applying the preconditioner}

Partition $r=(r_1,r_2,r_3)$ conformally with \eqref{eq:Qhat} and solve
$\Qh_\alpha s=r$ by block elimination.  It is convenient to start from the
third block row, whose diagonal block $\alpha I$ is trivially invertible:
$s_3=\alpha^{-1}(r_3-Bs_2)$, and substituting this into the second block row
gives
\begin{equation}\label{eq:Wsolve}
  \underbrace{\Big(\Sch_1+\tfrac1\alpha B\tr B\Big)}_{=:W_\alpha}s_2
  = r_2+\tfrac1\alpha B\tr r_3 ,
\end{equation}
after which $A_ds_1=r_1-G\tr s_2$.  Algorithm~\ref{alg:apply} collects the
steps.

\begin{algorithm}[htbp]
\caption{Application of $\Qh_\alpha^{-1}$ to a vector $r=(r_1,r_2,r_3)$}
\label{alg:apply}
\begin{algorithmic}[1]
\State \textbf{Setup (once):} incomplete Cholesky factor $\widetilde F$ of
       $A_d$ and $\widehat T$ from \eqref{eq:S1hat}; sparse factorizations of
       $A_d$ and of $W_\alpha=\Sch_1+\alpha^{-1}B\tr B$.
\State Solve $W_\alpha s_2 = r_2+\alpha^{-1}B\tr r_3$.
\State $s_3\gets\alpha^{-1}(r_3-Bs_2)$.
\State Solve $A_d s_1 = r_1-G\tr s_2$.
\State \Return $s=(s_1,s_2,s_3)$.
\end{algorithmic}
\end{algorithm}

Steps 2--4 involve $A_d$, $B$, $G$ and $\Sch_1$ and nothing else.  The second
Schur complement, whose approximation is the delicate ingredient of
$\widehat\M_1$, $\widehat\M_2$ and $\widehat\M_3$, has disappeared; this is
the practical side of Theorem~\ref{thm:Q}, where its effect was absorbed into
the scalar $\alpha$. There is a familiar object hiding in \eqref{eq:Wsolve}.  The operator
$W_\alpha=\Sch_1+\alpha^{-1}B\tr B$ is the augmented Lagrangian velocity
operator with penalty parameter $\alpha^{-1}$ \cite{BenziOlshanskii2006}, so
$\Q_\alpha$ can be read as an augmented Lagrangian treatment of the
velocity--pressure saddle point embedded in an exact treatment of the
Darcy--interface coupling.  The usual caveat of augmented Lagrangian methods
applies here too: $\operatorname{cond}(W_\alpha)$ grows like $\alpha^{-1}$,
which puts a floor under the useful values of $\alpha$ in finite precision.
We return to this in Section~\ref{sec:alphachoice}.

As for cost, each GMRES iteration requires two solves with the stored
factorizations of $A_d$ and $W_\alpha$ plus a few sparse matrix--vector
products, so the asymptotic cost per iteration is the same as for
$\widehat\M_3$.  The setup is the more expensive of the two on coarser
grids, since $W_\alpha$ is denser than $\Sch_1$;
Table~\ref{tab:timing} quantifies this.  When many systems share the
same matrix, as in time stepping, parameter continuation or uncertainty
quantification, the setup is paid once and the smaller iteration count
translates directly into a shorter run time.

\subsection{Choice of the regularization parameter}
\label{sec:alphachoice}

By Theorem~\ref{thm:cluster} the quantity that governs the clustering is not
$\alpha$ but the ratio $\alpha/\theta_{\min}$, where
$\theta_{\min}=\min_i|\theta_i|$ and $\theta_i\in\spec(\Sc_2)$.  Values of
$\theta_{\min}$ over a wide range of parameters are collected in
Table~\ref{tab:assumption}.  They show, first, that $\theta_{\min}$ is
essentially independent of $h$, which is one reason why the iteration counts
of Section~\ref{sec:numerics} do not grow with the mesh.  They also show how
it depends on the physical parameters.  For small permeability
$\theta_{\min}$ is proportional to $\kappa$ and, to the accuracy of the
table, independent of $\nu$.  This is what one expects
from the interface block $T$ in \eqref{eq:GAG}, which scales like
$\kappa^{-1}$ and dominates $\Sc_1$ in the interface directions.  For
$\kappa=1$ the proportionality saturates at a level that grows as $\nu$
decreases, about $0.15$--$0.17$ at $\nu=1$ and about $3$ at $\nu=10^{-2}$,
which is the
regime where the bulk Stokes part of $\Sc_1$, of size $\mathcal{O}(\nu)$,
determines the smallest eigenvalue instead.  In short,
$\theta_{\min}\approx\min\{c_1(\nu),\,c_2\kappa\}$ with $c_2\approx8$.
A fixed $\alpha$ must therefore lose its effect as $\kappa\to0$, because
$\alpha/\theta_{\min}\to\infty$.

This suggests scaling $\alpha$ with the permeability,
\begin{equation}\label{eq:alphascaled}
  \alpha=\alpha_0\min\{1,\kappa\},\qquad \alpha_0=10^{-2},
\end{equation}
which keeps $\alpha/\theta_{\min}$ at about $\alpha_0/c_2\approx10^{-3}$
throughout the small-$\kappa$ regime, and at $\alpha_0/c_1(\nu)\le0.07$
elsewhere for the values of $\nu$ tested.  The value $\alpha_0=10^{-2}$ is
a compromise.  Smaller values cluster the spectrum more tightly by
Theorem~\ref{thm:cluster}, but the growth of
$\operatorname{cond}(W_\alpha)=\mathcal{O}(\alpha^{-1})$ eventually spoils the
accuracy of the inner solves.  The first effect can be seen in
Table~\ref{tab:alpha}; the second becomes visible in
Table~\ref{tab:robust-nu2} on the finest grid, where the scaled rule pushes
$\alpha$ down to $10^{-10}$ and the counts start to creep up.  The rule
\eqref{eq:alphascaled} is tested in Tables~\ref{tab:robust-nu1} and
\ref{tab:robust-nu2}.

\section{Numerical experiments}
\label{sec:numerics}
We use our own implementation of restarted GMRES
\cite{SaadSchultz1986}
with modified Gram--Schmidt orthogonalization and Givens rotations rather than
a built-in routine, so that the stopping test and the reported iteration
counts are unambiguous and exactly reproducible.  The method is applied to the
left-preconditioned system and stops when
\[
  \|\M^{-1}(b-\K x_k)\|_2\le \texttt{tol}\cdot\|\M^{-1}b\|_2 .
\]
Unless stated otherwise we take GMRES$(20)$, $\texttt{tol}=10^{-8}$, at most
$500$ iterations, a zero initial guess, drop tolerance
$\varepsilon_{\rm IC}=10^{-2}$ in the incomplete Cholesky factorization, and
$\alpha=10^{-2}$.  In the tables ``IT'' is the total number of iterations and
``$-$'' means that $500$ iterations were reached without convergence.  Systems
with $A_d$, $\Sch_1$ and $W_\alpha$ are solved with sparse $LU$ factorizations
computed once in the setup phase.
All results reported below were produced with MATLAB~2026b on a Mac Studio with an Apple M4 Max processor and 64~GB of memory.
Times are wall-clock seconds and exclude the assembly of
$\K$, which is the same for all methods.

\subsection{Test problems}

We use two manufactured solutions taken from
\cite{ShiueOngLai2018,GreifHe2023}.

\begin{example}\label{ex:1}
$\Oms=[0,1]\times[1,2]$, $\Omd=[0,1]^2$, $y_\Gamma=1$, and
\[
  u=-\tfrac1\pi e^y\sin(\pi x),\qquad v=(e^y-e)\cos(\pi x),
\]
\[
  p=2e^y\cos(\pi x),\qquad \phi=(e^y-ey)\cos(\pi x).
\]
The interface conditions \eqref{eq:interface} force
$\nu=\alpha_{\mathrm{BJ}}=1$ here, so this example serves to verify the
discretization rather than to vary parameters.
\end{example}

\begin{example}\label{ex:3}
$\Oms=[0,1]^2$, $\Omd=[0,1]\times[-1,0]$, $y_\Gamma=0$, and
\[
  u=\eta'(y)\cos x,\quad v=\eta(y)\sin x,\quad p=0,\quad \phi=e^y\sin x,
\]
\[
  \eta(y)=-\kappa-\frac{y}{2\nu}
        +\Big(\frac\kappa2-\frac{\alpha_{\mathrm{BJ}}}{4\nu^2}\Big)y^2 .
\]
The three coefficients of $\eta$ are fixed by \eqref{eq:interface}, and no
constraint is placed on $\nu$, $\kappa$ or $\alpha_{\mathrm{BJ}}$.  All
parameter studies below use this example, with $\alpha_{\mathrm{BJ}}=1$
throughout.  Its exact Stokes pressure vanishes identically, so the column
$\|p-p_h\|$ of Table~\ref{tab:converr-ex3} below measures the decay of the
spurious discrete pressure rather than an approximation error in the usual
sense.
\end{example}

Dirichlet data are taken from the exact solutions.  Table~\ref{tab:dimensions}
lists the problem sizes.

\begin{table}[htbp]
\centering
\caption{Number of unknowns of the discrete Stokes--Darcy system.}
\label{tab:dimensions}
\small
\setlength{\tabcolsep}{4pt}
\begin{tabular}{rrrrr}
\toprule
$n$ & $\dim\phi_h$ & $\dim\mathbf{u}_h$ & $\dim p_h$ & $N=4n^2-n$ \\
\midrule
16 & 256 & 496 & 256 & 1,008 \\
32 & 1,024 & 2,016 & 1,024 & 4,064 \\
64 & 4,096 & 8,128 & 4,096 & 16,320 \\
128 & 16,384 & 32,640 & 16,384 & 65,408 \\
256 & 65,536 & 130,816 & 65,536 & 261,888 \\
512 & 262,144 & 523,776 & 262,144 & 1,048,064 \\
1024 & 1,048,576 & 2,096,128 & 1,048,576 & 4,193,280 \\
\bottomrule
\end{tabular}
\end{table}

\subsection{Verification of the discretization}

Table~\ref{tab:converr-ex1} gives the discrete $L^2$ errors and the observed
convergence orders for Example~\ref{ex:1} with
$\nu=\kappa=\alpha_{\mathrm{BJ}}=1$.  The velocity components and the Darcy
potential converge with order two.  The pressure is slower, rising from
$1.714$ on the coarsest pair of grids to $1.851$ on the finest; the order is
still climbing at $n=512$.  Table~\ref{tab:converr-ex3} does the same for
Example~\ref{ex:3} with $\nu=1$ and $\kappa=10^{-2}$, where all four
components converge with order one.  Both outcomes are consistent with the
theory of \cite{ShiueOngLai2018}, which guarantees first order under a mesh
constraint.  We use the same grids as \cite{GreifHe2023} so that the two sets
of tables can be compared line by line.

\begin{table}[htbp]
\centering
\caption{Discrete $L^2$ errors and observed convergence orders for Example~1 with $\nu=\kappa=\alpha_{\rm BJ}=1$.}
\label{tab:converr-ex1}
\footnotesize
\setlength{\tabcolsep}{4pt}
\begin{tabular}{rcccccccc}
\toprule
& \multicolumn{2}{c}{$\|u-u_h\|$} & \multicolumn{2}{c}{$\|v-v_h\|$} & \multicolumn{2}{c}{$\|p-p_h\|$} & \multicolumn{2}{c}{$\|\phi-\phi_h\|$} \\
\cmidrule(lr){2-3}\cmidrule(lr){4-5}\cmidrule(lr){6-7}\cmidrule(lr){8-9}
$n$ & error & order & error & order & error & order & error & order \\
\midrule
32 & $6.88\!\times\!10^{-4}$ & --- & $9.09\!\times\!10^{-4}$ & --- & $2.37\!\times\!10^{-2}$ & --- & $2.41\!\times\!10^{-4}$ & --- \\
64 & $1.73\!\times\!10^{-4}$ & 1.989 & $2.29\!\times\!10^{-4}$ & 1.989 & $7.23\!\times\!10^{-3}$ & 1.714 & $6.05\!\times\!10^{-5}$ & 1.995 \\
128 & $4.35\!\times\!10^{-5}$ & 1.996 & $5.74\!\times\!10^{-5}$ & 1.996 & $2.11\!\times\!10^{-3}$ & 1.776 & $1.52\!\times\!10^{-5}$ & 1.998 \\
256 & $1.09\!\times\!10^{-5}$ & 1.998 & $1.44\!\times\!10^{-5}$ & 1.999 & $5.98\!\times\!10^{-4}$ & 1.820 & $3.79\!\times\!10^{-6}$ & 1.999 \\
512 & $2.72\!\times\!10^{-6}$ & 1.999 & $3.59\!\times\!10^{-6}$ & 2.000 & $1.66\!\times\!10^{-4}$ & 1.851 & $9.48\!\times\!10^{-7}$ & 2.000 \\
\bottomrule
\end{tabular}
\end{table}
\begin{table}[htbp]
\centering
\caption{Discrete $L^2$ errors and observed convergence orders for Example~2 with $\nu=1$, $\kappa=10^{-2}$ and $\alpha_{\rm BJ}=1$.}
\label{tab:converr-ex3}
\footnotesize
\setlength{\tabcolsep}{4pt}
\begin{tabular}{rcccccccc}
\toprule
& \multicolumn{2}{c}{$\|u-u_h\|$} & \multicolumn{2}{c}{$\|v-v_h\|$} & \multicolumn{2}{c}{$\|p-p_h\|$} & \multicolumn{2}{c}{$\|\phi-\phi_h\|$} \\
\cmidrule(lr){2-3}\cmidrule(lr){4-5}\cmidrule(lr){6-7}\cmidrule(lr){8-9}
$n$ & error & order & error & order & error & order & error & order \\
\midrule
32 & $9.20\!\times\!10^{-5}$ & --- & $1.14\!\times\!10^{-4}$ & --- & $1.38\!\times\!10^{-2}$ & --- & $1.03\!\times\!10^{-3}$ & --- \\
64 & $4.48\!\times\!10^{-5}$ & 1.039 & $5.35\!\times\!10^{-5}$ & 1.094 & $7.02\!\times\!10^{-3}$ & 0.975 & $4.86\!\times\!10^{-4}$ & 1.077 \\
128 & $2.22\!\times\!10^{-5}$ & 1.016 & $2.59\!\times\!10^{-5}$ & 1.046 & $3.54\!\times\!10^{-3}$ & 0.987 & $2.37\!\times\!10^{-4}$ & 1.035 \\
256 & $1.10\!\times\!10^{-5}$ & 1.006 & $1.28\!\times\!10^{-5}$ & 1.022 & $1.78\!\times\!10^{-3}$ & 0.994 & $1.17\!\times\!10^{-4}$ & 1.016 \\
512 & $5.50\!\times\!10^{-6}$ & 1.003 & $6.33\!\times\!10^{-6}$ & 1.011 & $8.91\!\times\!10^{-4}$ & 0.997 & $5.83\!\times\!10^{-5}$ & 1.008 \\
\bottomrule
\end{tabular}
\end{table}

\subsection{The unpreconditioned spectrum}
\label{sec:kspec}

Before turning to the preconditioners we check the assembly of $\K$ itself.
Figure~\ref{fig:kspectrum} shows its eigenvalues for $n=32$ at four pairs of
physical parameters, reproducing the experiment of
\cite[Fig.~6]{GreifHe2023}, and Table~\ref{tab:kspectrum} compares the extreme
real parts with the values quoted there.  Every digit given in
\cite{GreifHe2023} is reproduced but one.  We obtain $81.9$ and $-8183.0$ for
$\nu=1$, $\kappa=10^{-2}$; $8189.5$ for $\nu=10^{-2}$, $\kappa=1$; and $90.0$
and $-90.8$ for $\nu=10^{-4}$, $\kappa=10^{-8}$; for $\nu=\kappa=1$ only the
order of magnitude $\pm10^4$ is quoted there, which we reproduce as well.  So
are the qualitative features: the imaginary parts stay below $2.5$ in magnitude while the real parts
reach $\pm10^4$, and for $\nu=10^{-4}$, $\kappa=10^{-8}$ the spectrum is
entirely real and nearly symmetric about the origin.  


\begin{figure}[htbp]
\centering
\includegraphics[width=0.9\textwidth]{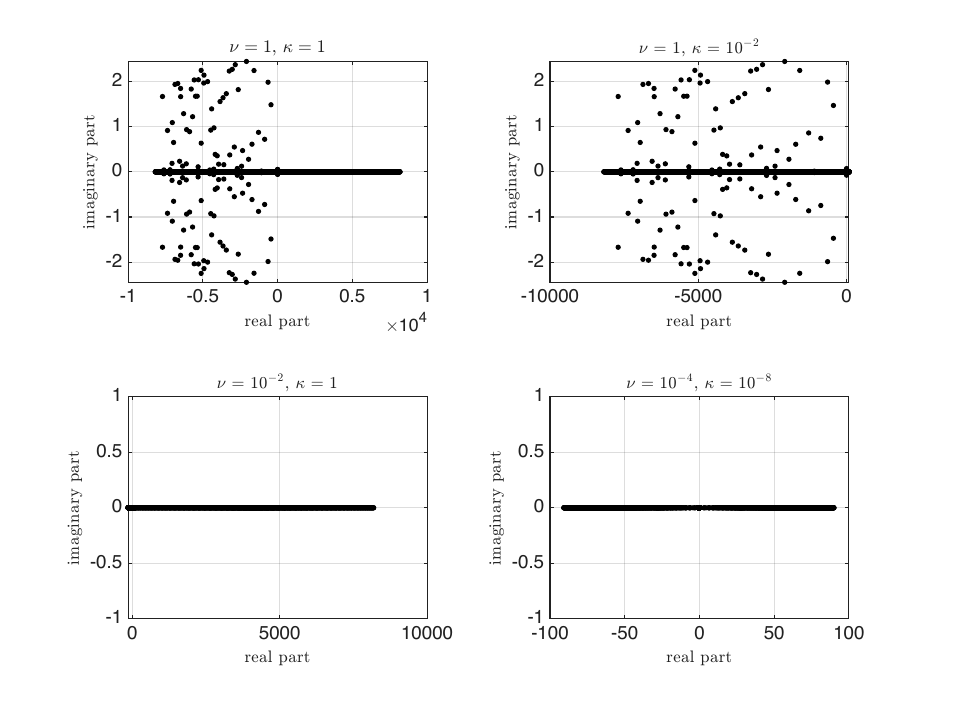}
\caption{Eigenvalues of the unpreconditioned matrix $\K$ for $n=32$ and four
pairs of physical parameters (Example~\ref{ex:3}), reproducing
\cite[Fig.~6]{GreifHe2023}.}
\label{fig:kspectrum}
\end{figure}

\begin{table}[htbp]
\centering
\caption{Extreme eigenvalues of the unpreconditioned matrix $\K$ for $n=32$ (Example~2), compared with the values quoted in \cite[Sec.~5.2]{GreifHe2023} for the same experiment.  The column \cite{GreifHe2023} lists their $\max_i\operatorname{Re}\lambda_i$ and $\min_i\operatorname{Re}\lambda_i$.}
\label{tab:kspectrum}
\small
\setlength{\tabcolsep}{4pt}
\begin{tabular}{ccrrcrr}
\toprule
$\nu$ & $\kappa$ & $\max_i\operatorname{Re}\lambda_i$ & $\min_i\operatorname{Re}\lambda_i$ & \cite{GreifHe2023} & $\max_i|\operatorname{Im}\lambda_i|$ & \# complex \\
\midrule
$1$ & $1$ & $8189.5$ & $-8183.0$ & $\approx\pm10^{4}$ & $2.45$ & 138 \\
$1$ & $10^{-2}$ & $81.9$ & $-8183.0$ & $81.9$ / $-8183.0$ & $2.45$ & 138 \\
$10^{-2}$ & $1$ & $8189.5$ & $-140.1$ & $8189.5$ / $-0.4$ & $0.00$ & 0 \\
$10^{-4}$ & $10^{-8}$ & $90.0$ & $-90.8$ & $90.0$ / $-90.8$ & $0.00$ & 0 \\
\bottomrule
\end{tabular}
\end{table}

\subsection{Verification of the spectral analysis}
\label{sec:verify}

Table~\ref{tab:idealspec} lists the distinct eigenvalues of the four
preconditioned matrices for $n=8$ with exact Schur complements, together with
their algebraic multiplicities.  For $\M_1$ the four multiplicities of
Theorem~\ref{thm:M1} are attained with equality, and not merely as the lower
bounds the proof of \cite[Thm.~4.1]{GreifHe2023} supplies: we count
$n^2-n=56$ for each of $1$ and $(-1\pm\sqrt5)/2$, and $(n-1)^2=49$ for $-1$.
The last of these does not appear as a single row of the table, because the
clustering tolerance separates a few nearby but distinct values from it; they
are absorbed in the ``$25$ further'' count.  Exactly $n=8$ eigenvalues exceed
$1$ and exactly $n=8$ lie in $(0,1)\setminus\{(-1+\sqrt5)/2\}$, so both bounds
of that theorem are attained as well, and the $2n-1=15$ eigenvalues it leaves
unlocated \cite[Rem.~4.2]{GreifHe2023} turn out to be small perturbations of
$-1$ and $(-1-\sqrt5)/2$.  This is consistent with the numerical confirmation
reported in \cite{GreifHe2023}, obtained here with an independent
implementation.  For $\M_2$ the computation reproduces
Theorem~\ref{thm:M2} exactly: four distinct eigenvalues $1$, $-1$ and
$(-1\pm\sqrt5)/2$, with multiplicities $64$, $56$, $64$, $64$, that is $n^2$,
$n^2-n$, $n^2$, $n^2$.  For $\M_3$ a single eigenvalue $1$ of multiplicity
$N=248$ is found, as Theorem~\ref{thm:M3} predicts.  For $\Q_\alpha$ we obtain
exactly $n^2=64$ eigenvalues equal to $1$ and $2n^2-n=120$ equal to $-1$, plus
$64$ further eigenvalues clustered near $1$.  We also checked directly that
these $64$ values agree with  $\theta_i/(\theta_i+\alpha)$
of $\spec(\Sc_2)$.

\begin{table}[htbp]
\centering
\caption{Distinct eigenvalues of the preconditioned matrices for the ideal preconditioners with $n=8$ ($\nu=\kappa=1$, $\alpha=10^{-2}$), computed with a dense eigensolver and clustered to a tolerance of $10^{-4}$. The counts agree with Theorems~\ref{thm:M1}, \ref{thm:M2}, \ref{thm:M3} and \ref{thm:Q}.}
\label{tab:idealspec}
\footnotesize
\setlength{\tabcolsep}{4pt}
\begin{tabular}{lcl}
\toprule
preconditioner & \# distinct & eigenvalues (algebraic multiplicity) \\
\midrule
$\mathcal{M}_1$ & 28 & $-1.6180$ (56), $0.6180$ (56), $1.0000$ (56), $\dots$ (25 further) \\
$\mathcal{M}_2$ & 4 & $-1.6180$ (64), $1.0000$ (64), $0.6180$ (64), $-1.0000$ (56) \\
$\mathcal{M}_3$ & 1 & $1.0000$ (248) \\
$\mathcal{Q}_\alpha$ & 25 & $-1.0000$ (120), $1.0000$ (64), $0.9901$ (36), $\dots$ (22 further) \\
\bottomrule
\end{tabular}
\end{table}

The entry for $\M_3$ deserves a word, because a preconditioned matrix with a
single eigenvalue can look like a programming error.  It is not one.  By
construction $\M_3=LD$ in the factorization \eqref{eq:ldu}, so
$\M_3^{-1}\K=(LD)^{-1}LDU=U$, the unit upper triangular factor of that
factorization, and every eigenvalue of a unit triangular matrix equals $1$.
What the table does not show, and what matters for the solver, is that $U$ is
far from the identity: it differs from $I$ in two blocks, it is not
diagonalizable, and by Theorem~\ref{thm:M3} its minimal polynomial is
$(z-1)^3$.  That is why GMRES needs three iterations rather than one.  Exact
block triangular preconditioners for saddle-point matrices behave this way
generally; see \cite{BenziGolubLiesen2005,ElmanSilvesterWathen2014}.  It is
also why $\M_3$ is of theoretical interest only, since forming it calls for
exact solves with both Schur complements.

Table~\ref{tab:assumption} collects the spectral quantities that enter
Section~\ref{sec:spectral}.  The symmetric part of $A_s$ is positive definite
on the two coarse grids but turns mildly indefinite for $n\ge32$, at every
value of $\nu$ and $\kappa$ ($A_s$ does not depend on $\kappa$), the negative
eigenvalue being at most about $0.1\%$ of $\|A_s\|_2$ in magnitude on these
grids.  The symmetric part
of $\Sc_1$ inherits this only for $\kappa=\nu=1$; for the smaller
permeabilities the interface block $T$ restores definiteness.  This is the
situation described in Remark~\ref{rem:assumption}.  Even where the hypothesis
fails, $\Real\theta_i>0$ holds in every case, and $\theta_{\min}$, which is
the only quantity Theorem~\ref{thm:cluster} needs, stays bounded away from
zero on the grids of the table.  Its dependence on the
physical parameters is strong, and it is this dependence that motivates the
scaling \eqref{eq:alphascaled}.

\begin{table}[htbp]
\centering
\caption{Verification of the hypotheses of Section~\ref{sec:spectral}. $\mathcal{H}(\cdot)$ denotes the symmetric part and $\theta_i$ are the eigenvalues of the second Schur complement $\mathcal{S}_2=B\mathcal{S}_1^{-1}B^{T}$. The quantity $\theta_{\min}$ is the only one entering the clustering bound of Theorem~\ref{thm:cluster}; it is bounded away from zero on all grids considered.}
\label{tab:assumption}
\small
\setlength{\tabcolsep}{4pt}
\begin{tabular}{cccccccc}
\toprule
$\nu$ & $\kappa$ & $n$ & $\lambda_{\min}(\mathcal{H}(A_s))$ & $\lambda_{\min}(\mathcal{H}(\mathcal{S}_1))$ & $\min_i\operatorname{Re}\theta_i$ & $\theta_{\min}$ & $\max_i|\theta_i|$ \\
\midrule
$1$ & $1$ & 8 & $8.00\!\times\!10^{0}$ & $8.43\!\times\!10^{0}$ & $1.72\!\times\!10^{-1}$ & $1.72\!\times\!10^{-1}$ & $1.65\!\times\!10^{0}$ \\
$1$ & $1$ & 16 & $5.41\!\times\!10^{0}$ & $6.00\!\times\!10^{0}$ & $1.60\!\times\!10^{-1}$ & $1.60\!\times\!10^{-1}$ & $1.68\!\times\!10^{0}$ \\
$1$ & $1$ & 32 & $-4.69\!\times\!10^{0}$ & $-3.68\!\times\!10^{0}$ & $1.55\!\times\!10^{-1}$ & $1.55\!\times\!10^{-1}$ & $1.69\!\times\!10^{0}$ \\
$1$ & $1$ & 48 & $-2.09\!\times\!10^{1}$ & $-1.94\!\times\!10^{1}$ & $1.54\!\times\!10^{-1}$ & $1.54\!\times\!10^{-1}$ & $1.70\!\times\!10^{0}$ \\
$1$ & $10^{-4}$ & 8 & $8.00\!\times\!10^{0}$ & $1.38\!\times\!10^{1}$ & $6.94\!\times\!10^{-4}$ & $6.94\!\times\!10^{-4}$ & $1.01\!\times\!10^{0}$ \\
$1$ & $10^{-4}$ & 16 & $5.41\!\times\!10^{0}$ & $1.39\!\times\!10^{1}$ & $7.30\!\times\!10^{-4}$ & $7.30\!\times\!10^{-4}$ & $1.11\!\times\!10^{0}$ \\
$1$ & $10^{-4}$ & 32 & $-4.69\!\times\!10^{0}$ & $1.40\!\times\!10^{1}$ & $7.01\!\times\!10^{-4}$ & $7.01\!\times\!10^{-4}$ & $1.31\!\times\!10^{0}$ \\
$1$ & $10^{-4}$ & 48 & $-2.09\!\times\!10^{1}$ & $1.40\!\times\!10^{1}$ & $6.66\!\times\!10^{-4}$ & $6.66\!\times\!10^{-4}$ & $1.45\!\times\!10^{0}$ \\
$10^{-2}$ & $1$ & 8 & $8.31\!\times\!10^{-2}$ & $1.79\!\times\!10^{-1}$ & $3.40\!\times\!10^{0}$ & $3.40\!\times\!10^{0}$ & $1.09\!\times\!10^{2}$ \\
$10^{-2}$ & $1$ & 16 & $5.89\!\times\!10^{-2}$ & $1.81\!\times\!10^{-1}$ & $2.96\!\times\!10^{0}$ & $2.96\!\times\!10^{0}$ & $1.19\!\times\!10^{2}$ \\
$10^{-2}$ & $1$ & 32 & $-3.89\!\times\!10^{-2}$ & $1.80\!\times\!10^{-1}$ & $2.76\!\times\!10^{0}$ & $2.76\!\times\!10^{0}$ & $1.29\!\times\!10^{2}$ \\
$10^{-2}$ & $1$ & 48 & $-1.99\!\times\!10^{-1}$ & $1.78\!\times\!10^{-1}$ & $2.70\!\times\!10^{0}$ & $2.70\!\times\!10^{0}$ & $1.34\!\times\!10^{2}$ \\
$10^{-2}$ & $10^{-4}$ & 8 & $8.31\!\times\!10^{-2}$ & $1.93\!\times\!10^{-1}$ & $7.15\!\times\!10^{-4}$ & $7.15\!\times\!10^{-4}$ & $1.00\!\times\!10^{2}$ \\
$10^{-2}$ & $10^{-4}$ & 16 & $5.89\!\times\!10^{-2}$ & $1.95\!\times\!10^{-1}$ & $8.02\!\times\!10^{-4}$ & $8.02\!\times\!10^{-4}$ & $1.00\!\times\!10^{2}$ \\
$10^{-2}$ & $10^{-4}$ & 32 & $-3.89\!\times\!10^{-2}$ & $1.95\!\times\!10^{-1}$ & $8.87\!\times\!10^{-4}$ & $8.87\!\times\!10^{-4}$ & $1.00\!\times\!10^{2}$ \\
$10^{-2}$ & $10^{-4}$ & 48 & $-1.99\!\times\!10^{-1}$ & $1.95\!\times\!10^{-1}$ & $9.34\!\times\!10^{-4}$ & $9.34\!\times\!10^{-4}$ & $1.00\!\times\!10^{2}$ \\
$10^{-4}$ & $10^{-8}$ & 8 & $8.32\!\times\!10^{-4}$ & $1.95\!\times\!10^{-3}$ & $7.15\!\times\!10^{-8}$ & $7.15\!\times\!10^{-8}$ & $1.00\!\times\!10^{4}$ \\
$10^{-4}$ & $10^{-8}$ & 16 & $5.90\!\times\!10^{-4}$ & $1.97\!\times\!10^{-3}$ & $8.03\!\times\!10^{-8}$ & $8.03\!\times\!10^{-8}$ & $1.00\!\times\!10^{4}$ \\
$10^{-4}$ & $10^{-8}$ & 32 & $-3.87\!\times\!10^{-4}$ & $1.97\!\times\!10^{-3}$ & $8.91\!\times\!10^{-8}$ & $8.91\!\times\!10^{-8}$ & $1.00\!\times\!10^{4}$ \\
$10^{-4}$ & $10^{-8}$ & 48 & $-1.98\!\times\!10^{-3}$ & $1.97\!\times\!10^{-3}$ & $9.42\!\times\!10^{-8}$ & $9.42\!\times\!10^{-8}$ & $1.00\!\times\!10^{4}$ \\
\bottomrule
\end{tabular}
\end{table}

Figure~\ref{fig:spectra} shows the spectra of the four preconditioned matrices
for $n=16$.  The points $(-1\pm\sqrt5)/2$ characteristic of $\M_1$ and $\M_2$
are plainly visible, and so is the collapse of the spectrum of $\M_3^{-1}\K$
onto the single point $1$.  For $\Q_\alpha^{-1}\K$ one sees the two tight
clusters at $\pm1$ of Theorem~\ref{thm:cluster}.  


\begin{figure}[htbp]
\centering
\includegraphics[width=0.95\textwidth]{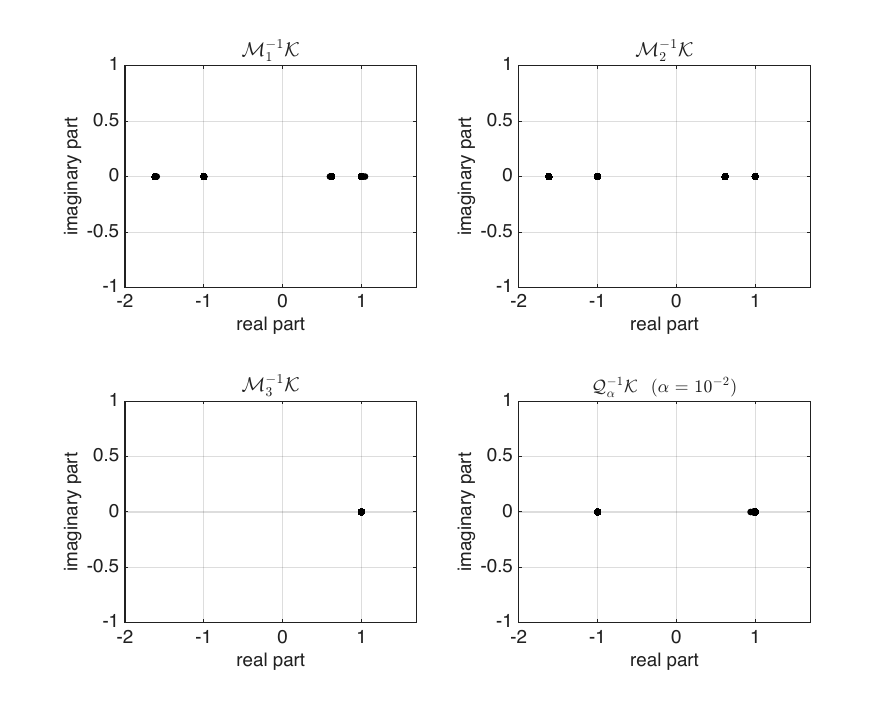}
\caption{Eigenvalues of the preconditioned matrices for Example~\ref{ex:3}
with $n=16$, $\nu=\kappa=1$, using the exact Schur complements
($\alpha=10^{-2}$ for $\Q_\alpha$).}
\label{fig:spectra}
\end{figure}

\subsection{The role of \texorpdfstring{$\alpha$}{alpha}}

Table~\ref{tab:alpha} reports iteration counts and CPU times for the five
values $10^{-3}$, $10^{-2}$, $10^{-1}$, $1$ and $10$ of the regularization
parameter.  The count falls monotonically with $\alpha$, from about $40$ at
$\alpha=10$ to $6$--$8$ at $\alpha=10^{-3}$, and for each fixed $\alpha$ it
grows only mildly with $n$: from $20$ to $27$ at $\alpha=1$, and by at most
two iterations over the whole range of grids for $\alpha\le10^{-2}$.  This is what Theorem~\ref{thm:cluster} predicts
once one knows that $\theta_{\min}$ is insensitive to $h$.

\begin{table}[htbp]
\centering
\caption{Iteration counts (IT) and total CPU times in seconds (setup plus solve) of GMRES$(20)$ preconditioned by $\widehat{\mathcal{Q}}_\alpha$ for different values of the regularization parameter $\alpha$ (Example~2, $\nu=\kappa=1$).}
\label{tab:alpha}
\footnotesize
\setlength{\tabcolsep}{4pt}
\begin{tabular}{rrrrrrrrrrr}
\toprule
 & \multicolumn{2}{c}{$\alpha=10^{-3}$} & \multicolumn{2}{c}{$\alpha=10^{-2}$} & \multicolumn{2}{c}{$\alpha=10^{-1}$} & \multicolumn{2}{c}{$\alpha=1$} & \multicolumn{2}{c}{$\alpha=10^{1}$} \\
\cmidrule(lr){2-3}\cmidrule(lr){4-5}\cmidrule(lr){6-7}\cmidrule(lr){8-9}\cmidrule(lr){10-11}
$n$ & IT & CPU & IT & CPU & IT & CPU & IT & CPU & IT & CPU \\
\midrule
16 & 6 & 0.002 & 8 & 0.001 & 12 & 0.002 & 20 & 0.002 & 35 & 0.004 \\
32 & 6 & 0.006 & 9 & 0.006 & 13 & 0.007 & 24 & 0.008 & 39 & 0.011 \\
64 & 8 & 0.025 & 9 & 0.024 & 14 & 0.028 & 26 & 0.037 & 40 & 0.048 \\
128 & 8 & 0.117 & 9 & 0.119 & 14 & 0.140 & 27 & 0.188 & 40 & 0.234 \\
\bottomrule
\end{tabular}
\end{table}

In the left panel of Figure~\ref{fig:alphaspec} one can watch the third group
of eigenvalues migrate towards $1$ as $\alpha$ decreases, with the clusters at
$\pm1$ staying put.  The right panel and Table~\ref{tab:bound} compare
$\max_i|1-\mu_i|$ with the bound \eqref{eq:sharpbound} and with the cruder
bound \eqref{eq:clusterbound} of Theorem~\ref{thm:cluster}.  The sharper
bound overestimates by a factor of at most $1.4$; it is correct to two digits
at $\alpha=10^{-3}$ and to three from $\alpha=10^{-4}$ on, and it is
asymptotically exact as $\alpha\downarrow0$.

\begin{figure}[htbp]
\centering
\includegraphics[width=0.95\textwidth]{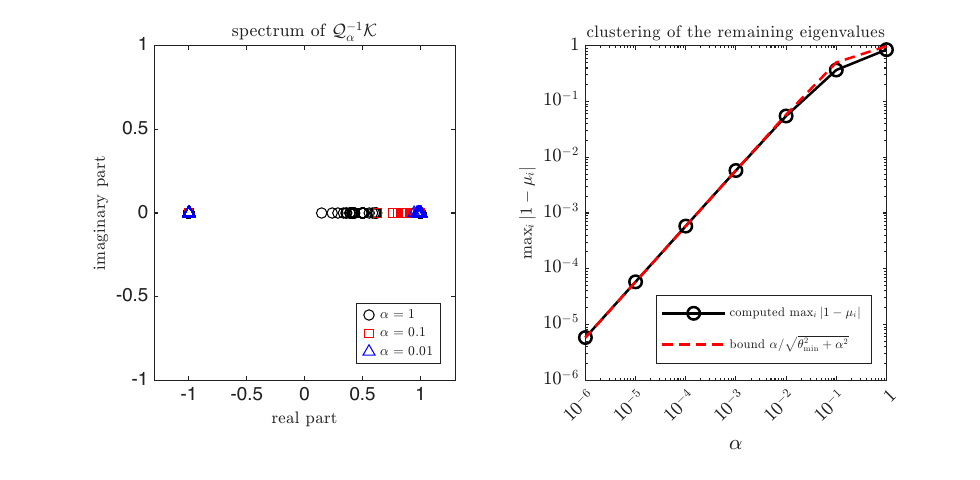}
\caption{Left: eigenvalues of $\Q_\alpha^{-1}\K$ for three values of $\alpha$
($n=8$, Example~\ref{ex:3}, $\nu=\kappa=1$, exact $\Sc_1$).  Right: computed
$\max_i|1-\mu_i|$ against the bound \eqref{eq:sharpbound} of
Proposition~\ref{prop:disk}.}
\label{fig:alphaspec}
\end{figure}

\begin{table}[htbp]
\centering
\caption{Sharpness of the eigenvalue bound \eqref{eq:sharpbound} of Proposition~\ref{prop:disk} for Example~2 with $n=8$, $\nu=\kappa=1$ and the exact Schur complement $\mathcal{S}_1$.}
\label{tab:bound}
\small
\setlength{\tabcolsep}{4pt}
\begin{tabular}{cccc}
\toprule
$\alpha$ & $\max_i|1-\mu_i|$ & $\alpha/\sqrt{\theta_{\min}^2+\alpha^2}$ & $\alpha/\theta_{\min}$ \\
\midrule
$1$ & $8.54\!\times\!10^{-1}$ & $9.86\!\times\!10^{-1}$ & $5.83\!\times\!10^{0}$ \\
$10^{-1}$ & $3.68\!\times\!10^{-1}$ & $5.04\!\times\!10^{-1}$ & $5.83\!\times\!10^{-1}$ \\
$10^{-2}$ & $5.51\!\times\!10^{-2}$ & $5.82\!\times\!10^{-2}$ & $5.83\!\times\!10^{-2}$ \\
$10^{-3}$ & $5.80\!\times\!10^{-3}$ & $5.83\!\times\!10^{-3}$ & $5.83\!\times\!10^{-3}$ \\
$10^{-4}$ & $5.83\!\times\!10^{-4}$ & $5.83\!\times\!10^{-4}$ & $5.83\!\times\!10^{-4}$ \\
$10^{-5}$ & $5.83\!\times\!10^{-5}$ & $5.83\!\times\!10^{-5}$ & $5.83\!\times\!10^{-5}$ \\
$10^{-6}$ & $5.83\!\times\!10^{-6}$ & $5.83\!\times\!10^{-6}$ & $5.83\!\times\!10^{-6}$ \\
\bottomrule
\end{tabular}
\end{table}

\subsection{Comparison of the preconditioners}

In this subsection we compare the four practical preconditioners
$\widehat\M_1$, $\widehat\M_2$, $\widehat\M_3$ and $\Qh_\alpha$ on
Example~\ref{ex:3} with $\nu=\kappa=1$, $\alpha=10^{-2}$ and drop tolerance
$10^{-2}$.  The iteration counts and the total CPU times of GMRES$(20)$ are
reported in Table~\ref{tab:compare}, and the corresponding relative
preconditioned residual norms for $n=64$ are displayed in
Figure~\ref{fig:residuals}.

The following observations can be made from Table~\ref{tab:compare}.

\begin{enumerate}\itemsep2pt
\item The iteration counts of $\Qh_\alpha$ are essentially independent of the
      mesh size: they equal $8$ for $n=16$ and $9$ for all $n$ from $32$ to
      $1024$, the largest system having more than $4\times10^{6}$ unknowns.
\item The iteration counts of $\widehat\M_3$ increase monotonically from $16$
      to $22$ as $n$ increases from $16$ to $1024$, which is consistent with
      the results reported in \cite{GreifHe2023}.  Those of $\widehat\M_1$
      and $\widehat\M_2$ remain between $54$ and $60$ and between $52$ and
      $64$ respectively, and exhibit no mesh-independent behavior.
\item Consequently, $\Qh_\alpha$ requires between $2.0$ and $2.4$ times fewer
      iterations than $\widehat\M_3$ and between $6.2$ and $6.8$ times fewer
      than $\widehat\M_1$, and the gap widens as the mesh is refined.
\item The total CPU time of $\Qh_\alpha$ is the smallest of the four for
      every $n$, and the reduction relative to $\widehat\M_1$ and
      $\widehat\M_2$ reaches factors of $2.5$ and $2.4$ respectively.
\end{enumerate}

Figure~\ref{fig:residuals} confirms that these counts do not depend on the
particular stopping criterion: the residual curve of $\Qh_\alpha$ decreases
more rapidly than those of the other three preconditioners throughout the
iteration.

\begin{table}[htbp]
\centering
\caption{Iteration counts (IT) and CPU times in seconds of GMRES$(20)$ for the four practical preconditioners (Example~2, $\nu=\kappa=1$, $\alpha=10^{-2}$, drop tolerance $10^{-2}$).}
\label{tab:compare}
\footnotesize
\setlength{\tabcolsep}{4pt}
\begin{tabular}{rrrrrrrrr}
\toprule
 & \multicolumn{2}{c}{$\widehat{\mathcal{M}}_1$} & \multicolumn{2}{c}{$\widehat{\mathcal{M}}_2$} & \multicolumn{2}{c}{$\widehat{\mathcal{M}}_3$} & \multicolumn{2}{c}{$\widehat{\mathcal{Q}}_\alpha$} \\
\cmidrule(lr){2-3}\cmidrule(lr){4-5}\cmidrule(lr){6-7}\cmidrule(lr){8-9}
$n$ & IT & CPU & IT & CPU & IT & CPU & IT & CPU \\
\midrule
16 & 54 & 0.004 & 52 & 0.003 & 16 & 0.002 & 8 & 0.002 \\
32 & 56 & 0.012 & 52 & 0.011 & 18 & 0.006 & 9 & 0.005 \\
64 & 60 & 0.047 & 56 & 0.045 & 18 & 0.023 & 9 & 0.019 \\
128 & 58 & 0.198 & 59 & 0.206 & 19 & 0.105 & 9 & 0.091 \\
256 & 58 & 0.930 & 62 & 0.973 & 20 & 0.500 & 9 & 0.471 \\
512 & 56 & 4.533 & 64 & 4.916 & 21 & 2.794 & 9 & 2.543 \\
1024 & 56 & 23.019 & 60 & 23.498 & 22 & 15.574 & 9 & 15.330 \\
\bottomrule
\end{tabular}
\end{table}
\begin{table}[htbp]
\centering
\caption{Breakdown of the CPU times of Table~\ref{tab:compare} into the setup phase (incomplete Cholesky factorization and sparse factorization of the diagonal blocks of the preconditioner) and the GMRES iteration phase, in seconds.}
\label{tab:timing}
\footnotesize
\setlength{\tabcolsep}{4pt}
\begin{tabular}{rrrrrrrrr}
\toprule
 & \multicolumn{2}{c}{$\widehat{\mathcal{M}}_1$} & \multicolumn{2}{c}{$\widehat{\mathcal{M}}_2$} & \multicolumn{2}{c}{$\widehat{\mathcal{M}}_3$} & \multicolumn{2}{c}{$\widehat{\mathcal{Q}}_\alpha$} \\
\cmidrule(lr){2-3}\cmidrule(lr){4-5}\cmidrule(lr){6-7}\cmidrule(lr){8-9}
$n$ & setup & solve & setup & solve & setup & solve & setup & solve \\
\midrule
16 & 0.001 & 0.003 & 0.001 & 0.003 & 0.001 & 0.001 & 0.001 & 0.001 \\
32 & 0.003 & 0.009 & 0.003 & 0.008 & 0.003 & 0.003 & 0.004 & 0.002 \\
64 & 0.012 & 0.035 & 0.012 & 0.033 & 0.011 & 0.011 & 0.011 & 0.008 \\
128 & 0.051 & 0.147 & 0.052 & 0.154 & 0.052 & 0.053 & 0.050 & 0.041 \\
256 & 0.268 & 0.662 & 0.260 & 0.713 & 0.262 & 0.239 & 0.271 & 0.200 \\
512 & 1.666 & 2.867 & 1.613 & 3.303 & 1.598 & 1.196 & 1.643 & 0.891 \\
1024 & 10.775 & 12.243 & 10.342 & 13.156 & 10.191 & 5.383 & 10.927 & 4.403 \\
\bottomrule
\end{tabular}
\end{table}

\begin{figure}[htbp]
\centering
\includegraphics[width=0.7\textwidth]{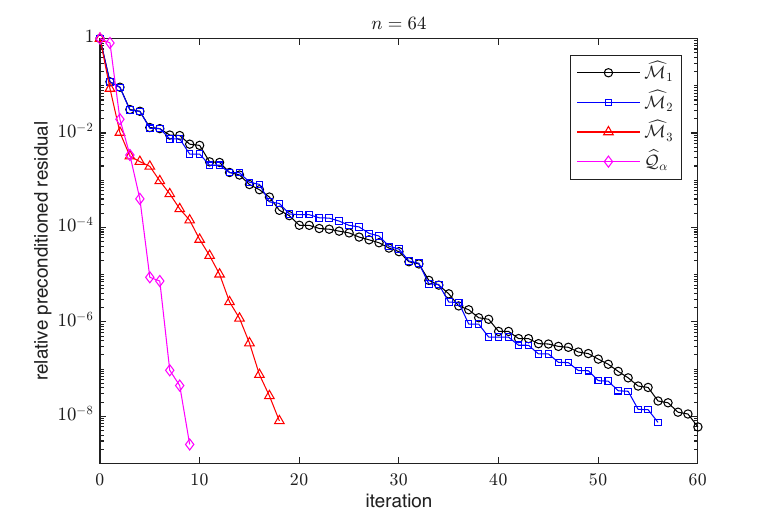}
\caption{Relative preconditioned residual norms for the four practical
preconditioners, Example~\ref{ex:3} with $n=64$ and $\nu=\kappa=1$.  The
stopping tolerance is $10^{-8}$, as everywhere else, so the curves end at the
iteration counts of the $n=64$ row of Table~\ref{tab:compare}.}
\label{fig:residuals}
\end{figure}

In order to examine the cost of the four preconditioners in more detail, the
CPU times of Table~\ref{tab:compare} are decomposed into the setup phase and
the GMRES iteration phase in Table~\ref{tab:timing}.  We observe the
following.

\begin{enumerate}\itemsep2pt
\item The four setup times are nearly identical: for $n\ge64$ they differ by
      at most $4$--$7\%$, and the setup time of $\Qh_\alpha$ exceeds the
      smallest of the four by at most $7\%$.  This is noteworthy, since all
      four preconditioners compute the same incomplete Cholesky factor of
      $A_d$ and the same sparse factorization of $A_d$, whereas
      $\widehat\M_1$, $\widehat\M_2$ and $\widehat\M_3$ subsequently
      factorize $\Sch_1$ and $\Qh_\alpha$ factorizes
      $W_\alpha=\Sch_1+\alpha^{-1}B\tr B$.  The grad--div term
      $\alpha^{-1}B\tr B$ couples the two velocity components over the whole
      domain, whereas $\Sch_1$ is nearly block diagonal with respect to them,
      so that $W_\alpha$ possesses considerably more fill-in.  Since the
      measured setup times nevertheless agree, this additional fill-in is not
      the dominant cost of the setup phase at the problem sizes considered
      here.
\item In the iteration phase $\Qh_\alpha$ is the cheapest of the four for
      every $n$, by factors of $1.2$--$1.5$ with respect to $\widehat\M_3$
      and of $2.8$--$4.5$ with respect to $\widehat\M_1$.
\item The total times are therefore determined by the iteration phase.  For a
      single right-hand side, $\Qh_\alpha$ is faster than $\widehat\M_3$ for
      all $n\ge32$, the reduction being $17\%$, $17\%$, $13\%$, $6\%$, $9\%$
      and $2\%$ for $n=32,64,128,256,512,1024$; the two are equal at $n=16$,
      where the times are at the resolution of the measurement.  At $n=256$
      the total times of $\widehat\M_1$ and $\widehat\M_2$ are $2.0$ and
      $2.1$ times that of $\Qh_\alpha$.
\item The advantage of $\Qh_\alpha$ over $\widehat\M_3$ in total time
      decreases on the finest meshes.  This is because the setup phase, which
      the two preconditioners share almost exactly, grows faster with $n$
      than the iteration phase; at $n=1024$ it accounts for $71\%$ of the
      total time of $\Qh_\alpha$.
\end{enumerate}

In summary, $\Qh_\alpha$ attains mesh-independent iteration counts at a setup
cost comparable with that of the existing block preconditioners, and it is
the fastest of the four in total time on every mesh considered.  Since the
setup is performed once whereas the iteration phase is repeated for each
right-hand side, the advantage of $\Qh_\alpha$ increases when the
factorization is reused, and it becomes decisive for small values of $\nu$
and $\kappa$ (see Section~\ref{sec:robust}), where $\widehat\M_3$ requires
several times as many iterations or fails to converge.  A further reduction
of the setup cost, for instance by replacing the direct solve for $W_\alpha$
with an inexact one, is left for future work.

\subsection{Quality of the Schur complement approximation}

Table~\ref{tab:inexact} isolates the effect of approximating the interface
block $T$, which is what Corollary~\ref{cor:pert} points to.  With the exact
$T$ the iteration count barely depends on $\kappa$.  With an incomplete
Cholesky factorization it degrades gracefully as the drop tolerance is
loosened, and the degradation is worse for small $\kappa$, where
$\|T\|=\mathcal{O}(\kappa^{-1})$ is large and so is $\|E\|$.  The
scaled-identity approximation $T\approx(3\kappa)^{-1}I$ of
\cite[Eq.~(4.10)]{GreifHe2023} is adequate for moderate parameters but is not
competitive once $\kappa\le10^{-4}$, and on the $n=128$ grid it fails outright
at $\kappa=10^{-6}$.  The interface block is the critical ingredient.

\begin{table}[htbp]
\centering
\caption{Iteration counts of GMRES$(20)$ preconditioned by $\widehat{\mathcal{Q}}_\alpha$ with different approximations of the interface block $T$ of \eqref{eq:GAG}, for $\nu=1$, $\alpha=10^{-2}$ and Example~2; the rows marked ``exact'' use $\mathcal{Q}_\alpha$ itself. ``IC$(\varepsilon)$'' denotes an incomplete Cholesky factorization with drop tolerance $\varepsilon$.}
\label{tab:inexact}
\small
\setlength{\tabcolsep}{4pt}
\begin{tabular}{lrrrrr}
\toprule
& & \multicolumn{4}{c}{$\kappa$} \\
\cmidrule(lr){3-6}
approximation of $T$ & $n$ & $1$ & $10^{-2}$ & $10^{-4}$ & $10^{-6}$ \\
\midrule
exact & 32 & 6 & 6 & 7 & 8 \\
exact & 64 & 6 & 6 & 7 & 8 \\
IC($10^{-3}$) & 32 & 8 & 9 & 13 & 16 \\
IC($10^{-3}$) & 64 & 8 & 11 & 15 & 17 \\
IC($10^{-3}$) & 128 & 9 & 12 & 18 & 21 \\
IC($10^{-2}$) & 32 & 9 & 12 & 16 & 19 \\
IC($10^{-2}$) & 64 & 9 & 12 & 20 & 35 \\
IC($10^{-2}$) & 128 & 9 & 13 & 34 & 38 \\
IC($10^{-1}$) & 32 & 9 & 13 & 19 & 24 \\
IC($10^{-1}$) & 64 & 9 & 13 & 28 & 39 \\
IC($10^{-1}$) & 128 & 9 & 14 & 40 & 48 \\
\bottomrule
\end{tabular}
\end{table}

\subsection{Robustness with respect to the physical parameters}
\label{sec:robust}

Tables~\ref{tab:robust-nu1} and \ref{tab:robust-nu2} report iteration counts
as $\kappa$ ranges over eight orders of magnitude, at $\nu=1$ and
$\nu=10^{-2}$.  Three methods are compared: $\widehat\M_3$, $\Qh_\alpha$ with
the fixed value $\alpha=10^{-2}$, and $\Qh_\alpha$ with the scaled choice
\eqref{eq:alphascaled}.

In the well-scaled regime $\kappa\ge10^{-2}$ the proposed preconditioner is
better everywhere, needing $8$--$19$ iterations against $12$--$36$ for
$\widehat\M_3$, at every mesh size and for both values of $\nu$.  Over the
whole set of $72$ cases, $\Qh_\alpha$ converged in all of them and
$\widehat\M_3$ stagnated in seven, all at small $\kappa$ on the finer grids,
where the diagonal surrogate $\Sch_2$ no longer represents the second Schur
complement.  That is the practical payoff of doing without $\Sch_2$.

The scaled choice \eqref{eq:alphascaled} matters a great deal once
$\kappa$ is small.  With $\alpha$ fixed at $10^{-2}$ the count reaches $102$ at
$n=256$, $\nu=1$, $\kappa=10^{-6}$; scaling $\alpha$ brings it down to $27$
and keeps every entry of both tables at or below $34$, the largest values
occurring at $\nu=10^{-2}$, $n=256$, $\kappa\le10^{-7}$, where
\eqref{eq:alphascaled} gives $\alpha\le10^{-9}$ and the conditioning of
$W_\alpha$ begins to tell.  Again this follows the
theory: by Theorem~\ref{thm:cluster} the clustering is controlled by
$\alpha/\theta_{\min}$, and Table~\ref{tab:assumption} shows $\theta_{\min}$
shrinking in proportion to $\kappa$.

One effect runs the other way and should be recorded.  For $\nu=10^{-2}$ and
$n=256$, $\widehat\M_3$ becomes markedly \emph{faster} as $\kappa$ decreases,
converging in $14$ iterations for every $\kappa\le10^{-4}$ and beating
$\Qh_\alpha$ there.  Non-monotone behaviour of $\widehat\M_3$ on the finest
grids was also reported in \cite[Tab.~9]{GreifHe2023}.  
It does
not survive the change to $\nu=1$, where in the same corner $\widehat\M_3$
needs $264$ iterations at $\kappa=10^{-5}$ and fails outright for
$\kappa\le10^{-6}$.  Figure~\ref{fig:robust} summarizes the
comparison on the finest grid.

\begin{table}[htbp]
\centering
\caption{Iteration counts of GMRES$(20)$ for $\nu=1$ and varying $n$ and $\kappa$ (Example~2). $\widehat{\mathcal{Q}}_\alpha$ is used both with the fixed parameter $\alpha=10^{-2}$ and with the scaled choice $\alpha=10^{-2}\min\{1,\kappa\}$ of Section~\ref{sec:alphachoice}. The symbol $-$ marks no convergence within $500$ iterations.}
\label{tab:robust-nu1}
\footnotesize
\setlength{\tabcolsep}{4pt}
\begin{tabular}{lrrrrrrrrrr}
\toprule
& & \multicolumn{9}{c}{$\kappa$} \\
\cmidrule(lr){3-11}
preconditioner & $n$ & $1$ & $10^{-1}$ & $10^{-2}$ & $10^{-3}$ & $10^{-4}$ & $10^{-5}$ & $10^{-6}$ & $10^{-7}$ & $10^{-8}$ \\
\midrule
$\widehat{\mathcal{M}}_3$ & 32 & 18 & 18 & 19 & 19 & 20 & 20 & 24 & 37 & 44 \\
$\widehat{\mathcal{Q}}_\alpha$, $\alpha=10^{-2}$ & 32 & 9 & 9 & 12 & 14 & 16 & 17 & 19 & 19 & 22 \\
$\widehat{\mathcal{Q}}_\alpha$, scaled $\alpha$ & 32 & 9 & 8 & 8 & 10 & 11 & 10 & 12 & 12 & 13 \\
$\widehat{\mathcal{M}}_3$ & 64 & 18 & 19 & 22 & 26 & 36 & 40 & 46 & 47 & 77 \\
$\widehat{\mathcal{Q}}_\alpha$, $\alpha=10^{-2}$ & 64 & 9 & 10 & 12 & 16 & 20 & 24 & 35 & 38 & 39 \\
$\widehat{\mathcal{Q}}_\alpha$, scaled $\alpha$ & 64 & 9 & 8 & 9 & 12 & 13 & 14 & 15 & 16 & 19 \\
$\widehat{\mathcal{M}}_3$ & 128 & 19 & 20 & 23 & 37 & 40 & 43 & 45 & 48 & $-$ \\
$\widehat{\mathcal{Q}}_\alpha$, $\alpha=10^{-2}$ & 128 & 9 & 10 & 13 & 19 & 34 & 44 & 38 & 40 & 45 \\
$\widehat{\mathcal{Q}}_\alpha$, scaled $\alpha$ & 128 & 9 & 9 & 10 & 13 & 16 & 18 & 18 & 21 & 23 \\
$\widehat{\mathcal{M}}_3$ & 256 & 20 & 21 & 25 & 40 & 44 & 264 & $-$ & $-$ & $-$ \\
$\widehat{\mathcal{Q}}_\alpha$, $\alpha=10^{-2}$ & 256 & 9 & 10 & 14 & 20 & 52 & 50 & 102 & 97 & 102 \\
$\widehat{\mathcal{Q}}_\alpha$, scaled $\alpha$ & 256 & 9 & 9 & 10 & 14 & 19 & 26 & 27 & 28 & 29 \\
\bottomrule
\end{tabular}
\end{table}
\begin{table}[htbp]
\centering
\caption{Iteration counts of GMRES$(20)$ for $\nu=10^{-2}$ and varying $n$ and $\kappa$ (Example~2); notation as in Table~\ref{tab:robust-nu1}.}
\label{tab:robust-nu2}
\footnotesize
\setlength{\tabcolsep}{4pt}
\begin{tabular}{lrrrrrrrrrr}
\toprule
& & \multicolumn{9}{c}{$\kappa$} \\
\cmidrule(lr){3-11}
preconditioner & $n$ & $1$ & $10^{-1}$ & $10^{-2}$ & $10^{-3}$ & $10^{-4}$ & $10^{-5}$ & $10^{-6}$ & $10^{-7}$ & $10^{-8}$ \\
\midrule
$\widehat{\mathcal{M}}_3$ & 32 & 15 & 17 & 12 & 19 & 21 & 39 & 40 & 40 & 43 \\
$\widehat{\mathcal{Q}}_\alpha$, $\alpha=10^{-2}$ & 32 & 8 & 9 & 11 & 12 & 12 & 13 & 14 & 15 & 16 \\
$\widehat{\mathcal{Q}}_\alpha$, scaled $\alpha$ & 32 & 8 & 9 & 10 & 9 & 9 & 10 & 10 & 12 & 12 \\
$\widehat{\mathcal{M}}_3$ & 64 & 15 & 19 & 23 & 36 & 37 & 37 & 39 & 41 & $-$ \\
$\widehat{\mathcal{Q}}_\alpha$, $\alpha=10^{-2}$ & 64 & 9 & 11 & 13 & 14 & 15 & 16 & 18 & 19 & 20 \\
$\widehat{\mathcal{Q}}_\alpha$, scaled $\alpha$ & 64 & 9 & 11 & 12 & 12 & 13 & 13 & 14 & 16 & 17 \\
$\widehat{\mathcal{M}}_3$ & 128 & 15 & 20 & 36 & 14 & 34 & 38 & 40 & $-$ & $-$ \\
$\widehat{\mathcal{Q}}_\alpha$, $\alpha=10^{-2}$ & 128 & 10 & 13 & 17 & 18 & 19 & 23 & 26 & 39 & 59 \\
$\widehat{\mathcal{Q}}_\alpha$, scaled $\alpha$ & 128 & 10 & 11 & 15 & 16 & 16 & 17 & 20 & 21 & 24 \\
$\widehat{\mathcal{M}}_3$ & 256 & 15 & 20 & 36 & 16 & 14 & 14 & 14 & 14 & 14 \\
$\widehat{\mathcal{Q}}_\alpha$, $\alpha=10^{-2}$ & 256 & 10 & 14 & 19 & 29 & 37 & 42 & 44 & 44 & 51 \\
$\widehat{\mathcal{Q}}_\alpha$, scaled $\alpha$ & 256 & 10 & 14 & 18 & 20 & 21 & 25 & 28 & 30 & 34 \\
\bottomrule
\end{tabular}
\end{table}

\begin{figure}[htbp]
\centering
\includegraphics[width=0.95\textwidth]{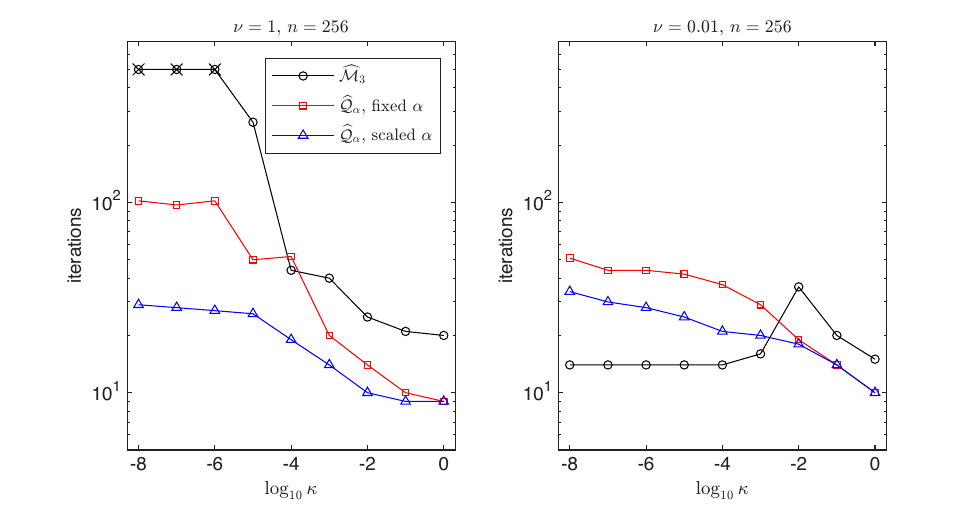}
\caption{Iteration counts as a function of $\kappa$ for $\widehat\M_3$ and for
$\Qh_\alpha$ with fixed and with scaled $\alpha$ (Example~\ref{ex:3}).
Crosses mark runs that did not converge within $500$ iterations.}
\label{fig:robust}
\end{figure}

\subsection{A computed solution}

Figure~\ref{fig:solution} shows the solution of Example~\ref{ex:1} on a
$64\times128$ grid.  Panels (a) and (b) give the computed Darcy potential and
Stokes pressure, which are plotted separately because the two fields differ by
an order of magnitude; the jump across $\Gamma$ allowed by \eqref{eq:if-normal}
is clearly resolved.  Panel (c) shows the velocity on the whole domain, the
Stokes velocity $(u_h,v_h)$ above $\Gamma$ and the Darcy velocity
$-\kappa\nabla\phi_h$ below it, with arrows drawn on top of the magnitude.  The
normal component passes through the interface without a visible kink, as
\eqref{eq:if-mass} requires, while the tangential component slips.  Panel (d)
plots the pointwise error of the combined potential/pressure field.  The error
is largest in the Stokes region, consistent with the error norms of
Table~\ref{tab:converr-ex1}, and it shows no layer along $\Gamma$.

\begin{figure}[htbp]
\centering
\includegraphics[width=0.9\textwidth]{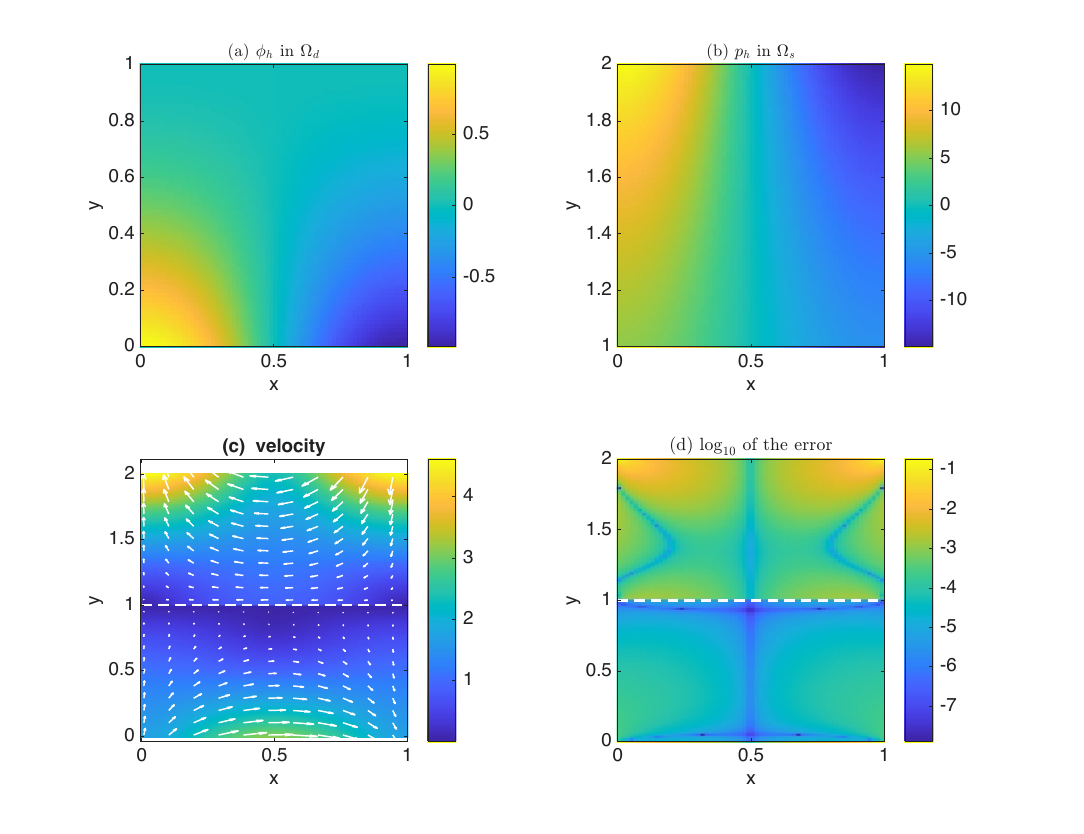}
\caption{Example~\ref{ex:1} with $n=64$: (a) computed Darcy potential
$\phi_h$; (b) computed Stokes pressure $p_h$; (c) velocity field, Stokes
velocity in $\Oms$ and Darcy velocity $-\kappa\nabla\phi_h$ in $\Omd$, arrows
over the magnitude; (d) base-ten logarithm of the pointwise error of the
combined field.  Dashed lines mark the interface $\Gamma$.}
\label{fig:solution}
\end{figure}

\FloatBarrier
\section{Concluding remarks}
\label{sec:conclusions}

We have proposed and analysed a coupled block preconditioner $\Q_\alpha$ for
the double saddle-point systems produced by the MAC discretization of the
Stokes--Darcy equations.  What distinguishes it from the block triangular
preconditioners in the literature is that the second Schur complement is
replaced by a scalar regularization instead of being approximated, so that the
only object one has to approximate is the first Schur complement, whose
structure is explicitly known.

The spectral analysis is complete.  The preconditioned matrix has $1$ and $-1$
as eigenvalues with multiplicities $n^2$ and $2n^2-n$, and the remaining $n^2$
eigenvalues are the M\"obius images $\theta/(\theta+\alpha)$ of the spectrum
of the second Schur complement.  From this we obtain the clustering rate
$\mathcal{O}(\alpha/\theta_{\min})$, a localization inside the disk
$|z-\tfrac12|<\tfrac12$, and a quantitative rule for scaling the
regularization parameter with the physical parameters.  For the inexact
variant, every eigenvalue other than $0$ and $1$ solves a scalar quadratic
with Rayleigh-quotient coefficients, which yields an
$\mathcal{O}(\|E\|^{1/2})$ bound on the distance of the spectrum from its
ideal location, with $E$ confined to the $n$ interface unknowns.

Numerically, at unit parameters the preconditioner cuts the GMRES iteration
count by a factor between two and seven relative to the block triangular
preconditioners and shows no dependence on the mesh size.  With the scaled
parameter \eqref{eq:alphascaled} it converges in every one of the $72$
configurations of the parameter study, across eight orders of magnitude of
the permeability, including the seven in which $\widehat\M_3$ stagnates.
The exception is $\nu=10^{-2}$ on the finest grid, where $\widehat\M_3$, in
the cases for which it converges at all, needs the fewer iterations.  Its
main cost is the setup, since $W_\alpha=\Sch_1+\alpha^{-1}B\tr B$ is denser
than $\Sch_1$.

Several questions remain open.  The most pressing is whether the direct solve
for $W_\alpha$ can be replaced by an inexact one, based on multigrid or on
augmented Lagrangian techniques; this would remove the setup bottleneck.
Theorem~\ref{thm:inexact} still applies, with $\Sch_1$ replaced by whatever
operator the inexact solve realizes, but $E$ is then no longer confined to the
interface and Lemma~\ref{lem:ranknE} is lost.  Extensions to three
dimensions, to the non-uniform grids of \cite{RuiSun2020}, and to the
time-dependent Navier--Stokes/Darcy coupling would all be worth pursuing.
Finally, the identity \eqref{eq:mobius} uses nothing specific to the
Stokes--Darcy problem, and we expect the analysis of
Section~\ref{sec:spectral} to carry over to other double saddle-point systems
with a nonzero $(2,2)$ block.

\section*{Author Contributions}

All authors contributed to the analysis and to the writing of the manuscript. Z.~Chao also designed the numerical test and performed the computations.

\section*{Acknowledgments}

Z. Chao was supported by an AMS-Simons Research Enhancement Grant for PUI Faculty.

\section*{Conflicts of Interest}

The authors declare no conflicts of interest.

\section*{Use of Artificial Intelligence}

Claude (Anthropic) was used to polish the English of the manuscript and check consistency. The
authors are responsible for the
scientific interpretation and for the final manuscript.

\section*{Data Availability Statement}

The MATLAB code that generates every table and figure in this paper is
available from the corresponding author upon reasonable request.

\bibliographystyle{plainnat}
\bibliography{refs}

\end{document}